\documentclass[12pt]{amsart}
\usepackage{amssymb,amsfonts,latexsym,amsmath,amsthm,graphicx}
\usepackage{parskip}
\usepackage{mathrsfs}
\usepackage{mathtools}

\mathtoolsset{showonlyrefs,showmanualtags}

\usepackage{esint}

\usepackage{hyperref}

\usepackage[margin=1.16in]{geometry}

\numberwithin{equation}{section}
\newtheorem{thm}{Theorem}[section]

\newtheorem{cor}[thm]{Corollary}
\newtheorem{lemma}[thm]{Lemma}

\newtheorem{q}[thm]{Question}
\theoremstyle{remark} 
\newtheorem{remark}[]{Remark}

\newcommand{\be}{\begin{equation}}
\newcommand{\ee}{\end{equation}}

\newcommand{\EE}{\mathbb{E}}
\newcommand{\PP}{\mathbb{P}}
\newcommand{\RR}{\mathbb{R}}

\DeclareMathOperator{\Real}{Re}
\DeclareMathOperator{\sech}{sech}
\DeclareMathOperator{\csch}{csch}
\DeclareMathOperator{\Corr}{Corr}

\newcommand{\ZNomega}{Z_{N,\omega}}

\usepackage[colorinlistoftodos]{todonotes}

\newcommand{\e}{\varepsilon}

\begin{document}

\title[Random Melnikov functions]{Zeros of Melnikov functions for periodically perturbed Hamiltonian systems with random coefficients}
\date{}

\author[A. Ittayem]{Ali Ittayem}
\address{Department of Mathematical Sciences, Florida Atlantic University,
Boca Raton, FL 33431, USA}
\email{aittayem2015@fau.edu}

\author[E. Lundberg]{Erik Lundberg}
\address{Department of Mathematical Sciences, Florida Atlantic University,
Boca Raton, FL 33431, USA}
\email{elundber@fau.edu}

\begin{abstract}
Introducing a probabilistic perspective on a classical theme within the study of chaotic dynamics, we investigate the zeros of the Melnikov distance function associated to a periodically perturbed Hamiltonian system when the perturbation has random coefficients.  We provide asymptotic estimates (as the period of the perturbation becomes large or small) for the average number of zeros and for the probability of no zeros.
\end{abstract}

\maketitle

\section{Introduction}

In this paper, we consider the nonlinear differential equation with a periodic external force $f(t)$
\begin{equation}\label{eq:DuffingSingle}
\ddot{y} +  \e \delta\dot{y} -  y + y^3 
=
\e f(t).
\end{equation}
This equation is a classical example of a nonlinear oscillator exhibiting chaotic dynamics \cite{GuckHolmes}.  A central tool in this analysis is the associated \emph{Melnikov function} (introduced in \cite{Melnikov} and used to study complicated dynamics in \cite{Arnold}, \cite{HolmesStrange}) that measures the first-order (in $\e$) separation between the stable and unstable manifolds of a hyperbolic fixed point of the associated Poincar\'e map.  
Simple zeros of the Melnikov function detect, for sufficiently small $\e>0$, transverse intersections of the stable and unstable manifolds near the unperturbed homoclinic orbit, which in turn lead to homoclinic chaos by the Smale--Birkhoff theorem.

Our goal is to analyze the typical behavior of the Melnikov function when the forcing function $f$ is an arbitrary trigonometric polynomial of degree $N$,
\be
f(t) = \sum_{k=1}^N a_k \cos(\omega k t)+ b_k \sin(\omega k t).
\ee
Previous studies of the Melnikov function have primarily focused on low-dimensional parameter spaces of differential equations, with special choices for the periodic forcing, such as a single sinusoid \cite{GuckHolmes}, a sum of two sinusoids \cite{Yang}, \cite{Wang}, \cite{Gear}, elliptic functions \cite{Chacon}, \cite{Li2024}, and certain piecewise linear waveforms such as triangular waves \cite{PiecewiseLinear}.

In order to investigate high-dimensional parameter space, we introduce a probabilistic perspective\footnote{The dynamical system remains deterministic with this approach.  It is merely the analysis of parameter space that becomes probabilistic, i.e., we are investigating the typical outcome of a randomly sampled deterministic system.}, equipping parameter space with a probability measure.
More precisely, we consider the following question while sampling the parameters as i.i.d. (independent and identically distributed) mean-zero Gaussians.

\begin{remark}
This approach complements qualitative methods (e.g., transversality and stability arguments) that are commonly used to establish genericity and robustness in dynamics.  Establishing that a
phenomenon occurs on a nonempty open set of parameters shows that it is robust and has positive probability (under any sampling measure with full support).  However, this alone provides limited information on the probability of the event, and in particular does not identify situations where the probability is close to zero or close to one. Quantitative analysis across parameter space can therefore provide additional insight, and the particular methods used here will yield sufficiently explicit analytic results to identify regimes in which the occurrence of at least one zero of the Melnikov function is prevalent or rare.
\end{remark}

\begin{q}\label{q:main}
What is the probability that the Melnikov function is zero-free when the parameters $\delta, a_k, b_k$ in the differential equation \eqref{eq:DuffingSingle} are randomly sampled?
\end{q}

As it turns out, the Melnikov function $M_{N, \omega}$ in this case is itself a trigonometric polynomial (see Theorem \ref{thm:Melnikov}) given by
\be
M_{N, \omega}(t_0)=-\frac{4}{3}\delta+\sqrt{2}\pi\omega\sum_{k=1}^N \sech \left( \frac{\pi k \omega}{2} \right) \big[ka_k \sin (\omega k t_0) - kb_k \cos (\omega k t_0)  \big] .
\ee
Hence, when the forcing function is a random trigonometric polynomial with i.i.d. coefficients, Question \ref{q:main} reduces to studying the zero-free event for a random trigonometric polynomial having independent but \emph{not identically distributed} coefficients (the random coefficients are scaled by frequency-dependent weights $(\omega k) \sech \left( \frac{\pi \omega k}{2} \right)$).
We provide estimates addressing this question, while considering separately the cases of small and large period (or equivalently, large and small frequency $\omega$).
Our main results show that these two cases represent contrasting regimes.
Let $Z_{N,\omega}$ denote the number of zeros of $M_{N,\omega}$
over one period. We prove the following estimates (see Section \ref{sec:prob}):
\begin{itemize}
\item At high frequency, the Melnikov function is zero-free with
overwhelming probability. More precisely, we have 
\[
\mathbb{P}(Z_{N,\omega}\geq1)=O(e^{-\omega}),
\qquad \omega\to\infty,
\]
see Theorem \ref{thm:nozeros}.

\item At low frequency, the Melnikov function has zeros with
overwhelming probability, provided the degree grows sufficiently
rapidly. More precisely, for every $\gamma>0$, there exist
constants $c,C>0$ such that
\[
\mathbb{P}(Z_{N,\omega}=0)
\leq C\exp(-c\omega^{-1/4})
\]
for all sufficiently small $\omega>0$ and all $N$ satisfying
$N\omega>\gamma$, see Theorem \ref{thm:zero-free}.
\end{itemize}
Here, the phrase \emph{with overwhelming probability} means that the exceptional
probability decays faster than every power of $\omega^{-1}$
in the high-frequency regime, or of $\omega$ in the
low-frequency regime.

En route to establishing these results, we determine another
quantity of independent interest, the average number of zeros
of the Melnikov function, and analyze its asymptotics,
see Section \ref{sec:exp}.

The proofs combine tools from perturbation theory of dynamical systems and the study
of Gaussian random functions.
The expected number of zeros of the Melnikov function is determined using  the Kac--Rice formula, which yields the high-frequency estimate through Markov's inequality.
The low-frequency result
(Theorem \ref{thm:zero-free}) requires a different argument:
correlation estimates at suitably separated points allow us
control the zero-free probability through a Gaussian orthant estimate.

\begin{remark}
For each fixed $N\geq1$ and $\omega>0$, the Melnikov function
$M_{N,\omega}$ almost surely has only simple zeros.
Consequently, almost surely, each zero gives rise to a transverse
homoclinic intersection of the perturbed system for all
sufficiently small $\e>0$.
The almost sure simplicity of the zeros follows from an application of Bulinskaya's Lemma \cite[Prop. 2.3]{AzaisBul}; indeed, to apply this result, one simply observes that the Melnikov function has smooth samples (each sample is a trigonometric polynomial), and the Gaussian densities of its point evaluations are uniformly bounded since
its variance is constant and strictly positive.
\end{remark}

While the probabilistic perspective introduced here appears to be completely novel to this particular setting, it shares some of its formal structure with the recent approach used by the second-named author together with M. Krishnapur and O. Nguyen to study the zeros of Melnikov functions arising in a dynamically distinct setting of enumerating limit cycles in autonomous planar systems \cite{Lundberg}, \cite{KLN}.

%Holmes and Marsden \cite{HolmesMarsden} Marsden survey \cite{Marsden}

%Melnikov method for homoclinic connections \cite{GuckHolmes}, \cite{Rand}, 

%Melnikov method in other areas: limit cycles \cite[Sec. 2.1 of Part II]{ChLi} \cite[Sec. 26]{IlyashenkoBook}, celestial mechanics \cite{Peterson}, 

%In this paper we consider the Melnikov function mentioned in \cite{GuckHolmes} defined for time-periodic systems of the form
%$$\dot{x}=f(x)+\epsilon g(x,t);  \quad  x=\begin{pmatrix} u \\ v

%\end{pmatrix}\in \mathbb{R}^2$$

%where the Melnikov function is defined as 
%$$M(t_0)=\int_{-\infty}^{\infty} f(q^0(t)) \wedge g(q_0(t), t+t_0) dt$$ 
%where $M(t_0)$ provides a measure of the separation of the manifolds at $q^0(0)$ on $\Sigma^{t_0}$, where $q$

\begin{figure}[h]
\centering
\includegraphics[scale=0.2]{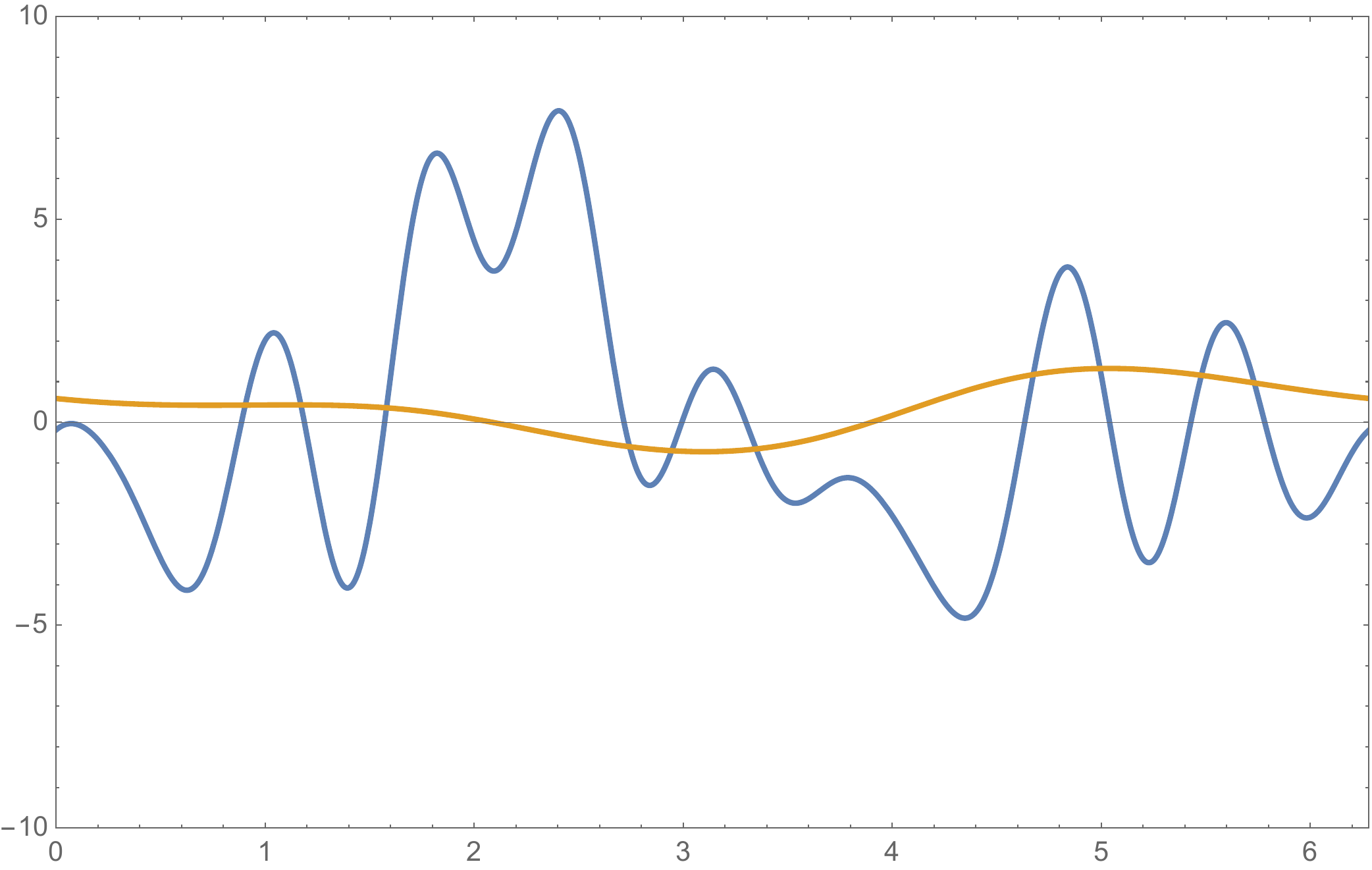} 
\hspace{.1in}
\includegraphics[scale=0.2]{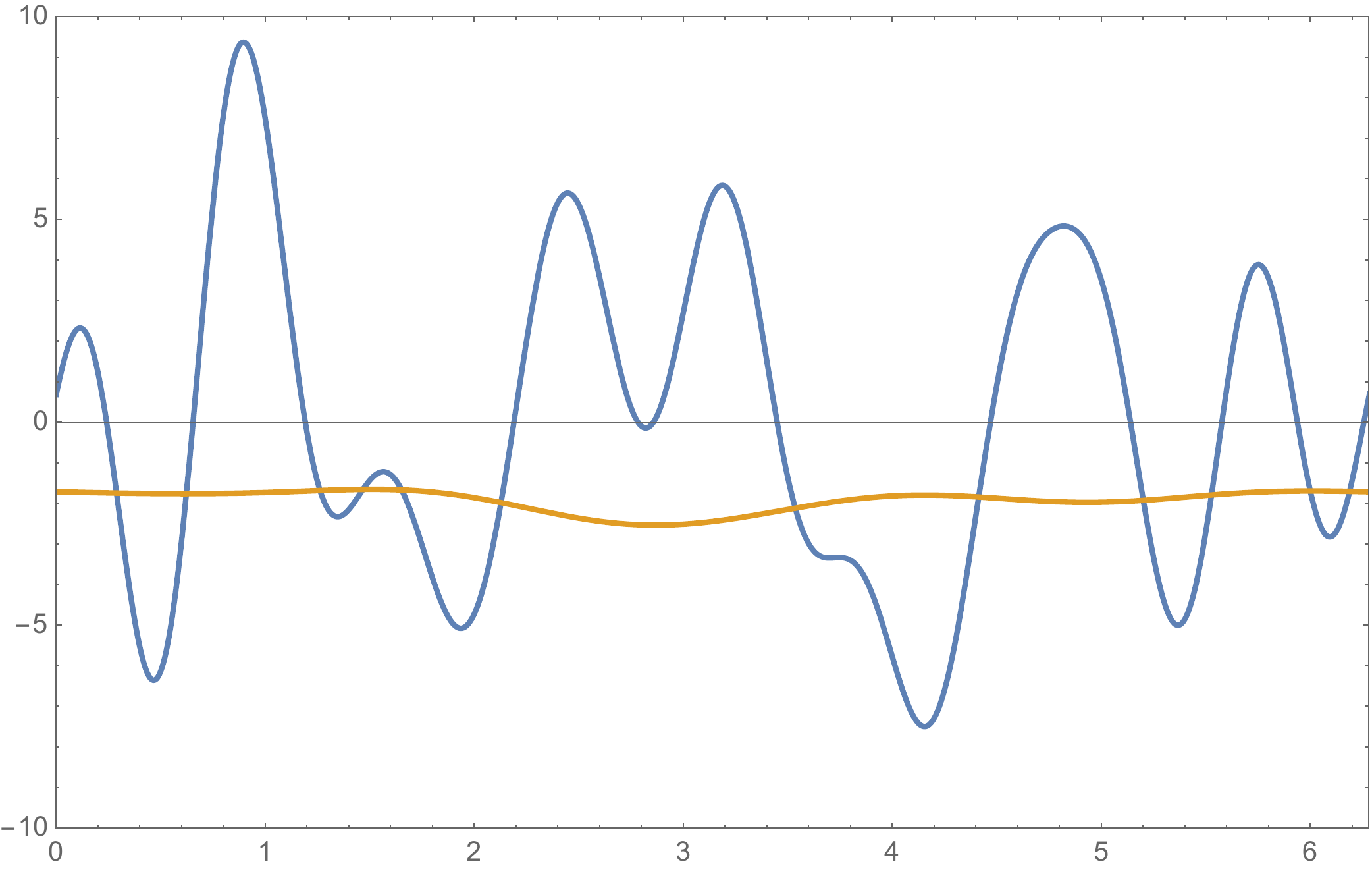} 
\caption{Random periodic forcing functions (trigonometric polynomials) of degree $10$ with their corresponding Melnikov functions overlaid.
In the sample on the left, the Melnikov function has two simple zeros, corresponding to transverse intersections of the stable and unstable manifolds (for small $\e>0$). In the sample on the right, the Melnikov function has no zeros, and hence, for sufficiently small $\e>0$, the stable and unstable manifolds do not intersect. Plots created using Mathematica.}
\label{fig:MelnikovSamples}
\end{figure}

%%Many different parameters, good to look at what is the expected outcome on average. 

While we focus on equation \eqref{eq:DuffingSingle} for concreteness, the methods used in this paper extend to other perturbed Hamiltonian systems having a homoclinic orbit for $\e = 0$, see the remarks in Section \ref{sec:concl}.

\subsection*{Outline of the paper}

In Section \ref{sec:prelim},
we review two preliminary results: 
the Melnikov distance for perturbed Hamiltonian systems and the Kac-Rice formula for the average number of zeros of random functions.
In Section \ref{sec:exp}, we apply those results to determine the average number of zeros for the Melnikov function associated to \eqref{eq:DuffingSingle}, where we also provide asymptotic estimates.
In Section \ref{sec:prob}, we provide asymptotic estimates for the probability of the event that the Melnikov function has no zeros.
We conclude with some discussion of future directions and open problems in Section \ref{sec:concl}. That section also gives an extension of the high-frequency result
(see Theorem \ref{thm:generalhighfrequency}) to a broad class of perturbed Hamiltonian systems.
The Appendix provides details for the computation of the Melnikov integral using contour integration and residues.

%\commE{ Maybe include a picture for this result with what system looks like with chaotic choice of delta and gamma}

\section{Preliminaries}\label{sec:prelim}

\subsection{Melnikov's Method}
We give a brief description of the Melnikov method following the presentation in \cite{GuckHolmes}, see Section 4.5 of that text for more details. Melnikov's method provides a criterion for detecting transverse homoclinic intersections in certain dynamical systems subject to small periodic perturbations. Consider a system of the form 
\be
\label{eq:hamilform} \dot{x}=h(x)+\epsilon g(x,t), \quad x=\begin{pmatrix} u \\v
\end{pmatrix}\in \mathbb{R}^2,
\ee
where $g(x,t)$ is a smooth function of period $T$ in $t$ and $h(x)$ is a Hamiltonian vector field, i.e.   
$h_1=\frac{\partial H }{\partial v}, h_2=-\frac{\partial H }{\partial u} $.  Further assume that the unperturbed system $\dot{x}=h(x)$ has a hyperbolic saddle point $p_0$ which is connected to itself via a homoclinic orbit $q^0(t)$.

We consider an autonomous version of the system in order to introduce the Melnikov integral: 
\begin{align*}
     \dot{x}&=h(x)+\varepsilon g(x, \theta)\\
    \dot{\theta}&=1 
\end{align*}
where $\theta$ is regarded modulo $T$. Thus, the extended phase space is $\mathbb{R}^2\times S^1 $. For each $t_0\in [0, T)$ we consider the cross section 
$$\Sigma^{t_0}=\{(x, \theta): \theta=t_0  \quad (\text{mod }T)\}$$ and denote the corresponding period-$T$ Poincar\'e map by $P_{\varepsilon}^{t_0}$. 

For $\varepsilon=0$ the saddle point $p_0$ corresponds to a hyperbolic fixed point of $P_0^{t_0}$, and the homoclinic orbit $q^0(t)$ lies in the coincident stable and unstable manifolds associated with this fixed point. Under a sufficiently small perturbation, $P_{\varepsilon}^{t_0}$ will still have a hyperbolic fixed point $p_\epsilon^{t_0}$, but the corresponding manifolds generally split. The Melnikov method measures the first-order separation between the perturbed stable and unstable manifolds along the unperturbed homoclinic orbit.

The Melnikov function is defined by 
\be\label{eq:MelnikovGeneral}
M(t_0)=\int_{-\infty}^{\infty} \nabla H(q^0(t)) \cdot g(q^0(t), t+t_0) dt,
\ee
which can be interpreted as measuring the accumulated first-order transverse effect of the perturbation along the unperturbed homoclinic orbit. More precisely, $M(t_0)$ determines the leading-order signed separation between the perturbed stable and unstable manifolds. 

Suppose that $M$ possesses a simple zero at $t_0=t_0^*$, i.e., 
$$M(t_0^*)=0\quad M'(t_0^*)\neq 0$$
Then for sufficiently small nonzero $\varepsilon$, the stable and unstable manifolds of the saddle point $p_\epsilon^{t_0}$ intersect transversely near the unperturbed homoclinic orbit $q^0(t)$ \cite[\text{Theorem} 4.5.3]{GuckHolmes}. Consequently, the perturbed Poincar\'e map possesses a transverse homoclinic point. 

The Smale-Birkhoff Homoclinic Theorem \cite[\text{Theorem} 5.3.5]{GuckHolmes} then implies that some iterate of the Poincaré map contains a hyperbolic invariant set on which the dynamics are conjugate to a shift on a finite set of symbols. Hence, the existence of a simple zero of the Melnikov function provides a sufficient condition for horseshoe-type chaotic dynamics in the perturbed system.

We also note that when $M$ is zero-free the stable and unstable manifolds do not intersect for $\e>0$ sufficiently small \cite[\text{Theorem} 4.5.3]{GuckHolmes}.

Let us apply this to the equation \eqref{eq:DuffingSingle}
while taking $u = y, v = \dot{y}$  to write it in the form \eqref{eq:hamilform}:
\begin{equation}\label{eq:Duffing}
\begin{bmatrix}
\dot{u}\\
\dot{v}
\end{bmatrix}=\begin{bmatrix}
v\\
u - u^3
\end{bmatrix} + \e\begin{bmatrix}
0\\
f(t) - \delta v
\end{bmatrix}=h(u,v)+\e g(u,v;t).
\end{equation}

\begin{thm}\label{thm:Melnikov}
If $f(t) = f_{N,\omega}(t)$ is a trigonometric polynomial of the form 
\begin{equation}\label{eq:fTrigPoly}
f_{N, \omega}(t)=\sum_{k=1}^{N} a_k \cos (\omega k t)+b_k \sin(\omega k t)
\end{equation}
then the Melnikov function associated to \eqref{eq:Duffing} is given by 
\begin{equation}\label{eq:MelnikovTrig}
M_{N, \omega}(t_0)=-\frac{4}{3}\delta+\sqrt{2}\pi\omega\sum_{k=1}^N \sech \left( \frac{\pi k \omega}{2} \right) \big[ka_k \sin (\omega k t_0) - kb_k \cos (\omega k t_0)  \big] .
\end{equation}
\end{thm}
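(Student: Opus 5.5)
The plan is to evaluate the general Melnikov integral \eqref{eq:MelnikovGeneral} directly for the system \eqref{eq:Duffing}.

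\textbf{Setup.} The Hamiltonian is $H(u,v)=\tfrac12 v^2-\tfrac12 u^2+\tfrac14 u^4$. Its homoclinic orbit through the right lobe is
\[
q^0(t)=(u_0(t),v_0(t))=\bigl(\sqrt{2}\sech t,\,-\sqrt{2}\sech t\tanh t\bigr).
\]
One checks that $u_0'=v_0$ and that $H(q^0)=0$. Since $g=(0,f(t)-\delta v)$, only the second component of $\nabla H$ contributes, and it equals $v$. The Melnikov function therefore becomes
\[
M(t_0)=\int_{-\infty}^\infty v_0(t)\bigl(f(t+t_0)-\delta v_0(t)\bigr)\,dt
=-\delta\int_{-\infty}^{\infty} v_0^2\,dt+\int_{-\infty}^\infty v_0(t)f(t+t_0)\,dt .
\]
The sign convention is fixed by matching \eqref{eq:MelnikovGeneral} with $\nabla H\cdot g$ as written.

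\textbf{Damping term.} We have
\[
\int v_0^2\,dt=2\int \sech^2 t\tanh^2 t\,dt .
\]
Substituting $s=\tanh t$ gives $2\int_{-1}^1 s^2\,ds=4/3$, which produces the constant $-\tfrac43\delta$.

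\textbf{Forcing term.} By linearity it suffices to treat a single mode $a_k\cos(\omega k(t+t_0))+b_k\sin(\omega k(t+t_0))$. Expand this with the addition formulas into $\cos(\omega k t)$ and $\sin(\omega k t)$ terms multiplied by trigonometric functions of $t_0$. Since $v_0$ is odd, the integrals against $\cos(\omega k t)$ vanish. What remains is the single integral
\[
I(\lambda)=\int_{-\infty}^{\infty}\sech t\tanh t\,\sin(\lambda t)\,dt,\qquad \lambda=\omega k .
\]
Integrating by parts, using $(\sech t)'=-\sech t\tanh t$, gives $I(\lambda)=\lambda\int \sech t\cos(\lambda t)\,dt$. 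The classical Fourier transform $\int_{-\infty}^\infty \sech t\, e^{i\lambda t}\,dt=\pi\sech(\pi\lambda/2)$ then yields $I(\lambda)=\pi\lambda\sech(\pi\lambda/2)$.

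\textbf{Assembly.} Inserting $v_0=-\sqrt2\sech t\tanh t$, the $k$-th mode contributes
\[
-\sqrt2\, I(\omega k)\bigl[-a_k\sin(\omega k t_0)+b_k\cos(\omega k t_0)\bigr]
=\sqrt2\pi\omega k\sech\!\left(\tfrac{\pi\omega k}{2}\right)\bigl[a_k\sin(\omega k t_0)-b_k\cos(\omega k t_0)\bigr].
\]
Summing over $k$ gives \eqref{eq:MelnikovTrig}.

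\textbf{Main obstacle.} The only nontrivial step is the transform of $\sech$. Following the Appendix's indicated method, I would prove it by contour integration over the rectangle with vertices $\pm R$ and $\pm R+i\pi$. The integrand $e^{i\lambda z}\sech z$ acquires the factor $-e^{-\pi\lambda}$ under the shift $z\mapsto z+i\pi$. Its only enclosed pole is at $z=i\pi/2$, with residue $-ie^{-\pi\lambda/2}$. The vertical sides vanish as $R\to\infty$, so
\[
(1+e^{-\pi\lambda})J=2\pi e^{-\pi\lambda/2},
\]
which gives $J=\pi\sech(\pi\lambda/2)$. The remaining care point is bookkeeping of the signs coming from the orientation of the orbit and the sign of $v_0$; these must be checked so that they reproduce the stated signs.
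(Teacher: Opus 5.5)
Your proof is correct. Its skeleton matches the paper's: keep only the second component $H_v=v$ of $\nabla H$, evaluate the damping term with $s=\tanh t$, and reduce the forcing term by linearity to the identities \eqref{eq:residue1}--\eqref{eq:residue2}. Your sign bookkeeping in the assembly step is already right, so the closing caveat is unnecessary.

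Where you differ is in how you prove those identities. The paper's Appendix computes the Fourier transform of $\sech t\tanh t$ directly. It integrates $\sinh z/\cosh^2 z\cdot e^{i\omega(z+t_0)}$ over rectangles of height $2\pi\ell$, picks up the double poles at $i(\pi/2+k\pi)$ for all $k\ge 0$, and sums the resulting geometric series of residues. Taking real and imaginary parts of \eqref{eq:Residue} then gives both identities at once.

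You instead use the oddness of $\sech t\tanh t$ to discard the cosine part. You then integrate by parts via $\sech t\tanh t=-(\sech t)'$, turning $I(\lambda)$ into $\lambda$ times the Fourier transform of $\sech t$. You evaluate that transform on a strip of fixed height $\pi$ using $\cosh(z+i\pi)=-\cosh z$. Only one simple pole appears, and the limit $R\to\infty$ only requires the bound $|\sech(\pm R+iy)|\le 1/\sinh R$ on the two vertical sides.

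Your route is shorter and uses only simple poles. It also shows that the weight $\omega k$ comes from differentiation: the coefficient is $\lambda$ times the Fourier transform of $u_0$, i.e.\ essentially $\widehat{v_0}$. This matches the Fourier-transform viewpoint of Theorem \ref{thm:generalhighfrequency}. The paper's route skips the parity and integration-by-parts reductions and gets the complex-exponential identity in one stroke. The price is second-order poles and an infinite residue sum over a contour that grows in height.
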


The essential underlying property of the Melnikov function used here is that it is linear in the perturbation.

\begin{proof}  
When $\e=0$, the system \eqref{eq:Duffing} has a hyperbolic saddle at $(0,0)$ and a pair of homoclinic orbits to $(0,0)$ based at $(\pm \sqrt{2}, 0)$.
Restricting our attention to the right halfplane, the orbit with initial condition $q^0(0)=(\sqrt{2},0)$ is given by
\be
q^0(t) = (\sqrt{2} \sech  t, -\sqrt{2} \sech  t \tanh t).
\ee
Then our goal is to compute the Melnikov integral associated to this homoclinic orbit 
\be\label{eq:MelnikovGoal}
M(t_0) = \int_{-\infty}^{\infty} \nabla H(q^0(t)) \cdot g(q^0(t), t+t_0)\, dt.
\ee
We find
\[
\nabla H(q^0(t)) =
\begin{bmatrix}
-\sqrt{2}\,\sech(t) + 2\sqrt{2}\,\sech^3(t)\\[4pt]
-\sqrt{2}\,\sech(t)\tanh(t)
\end{bmatrix},\]
and
\[
g(q^0(t), t+t_0) =
\begin{bmatrix}
0\\[4pt]
f_{N,\omega}(t+t_0) + \delta\sqrt{2}\,\sech(t)\tanh(t)
\end{bmatrix},
\]
and hence the integrand of the Melnikov function \eqref{eq:MelnikovGoal} is given by
\[
\nabla H(q^0(t)) \cdot g(q^0(t), t+t_0)
= -\bigg(
\sqrt{2}\,\sech(t)\tanh(t)f_{N, \omega}(t+t_0)
+ 2\delta\,\sech^2(t)\tanh^2(t)
\bigg).
\]

Therefore,
\be\label{eq:penultimate}
M(t_0)
= - 2\delta\int_{-\infty}^{\infty} \sech^2(t)\tanh^2(t)\, dt -\sqrt{2}\int_{-\infty}^{\infty} \sech(t)\tanh(t)f_{N, \omega}(t+t_0)\, dt.
\ee
The first integral is easily evaluated using the substitution $u = \tanh(t)$, which gives
\begin{equation}\label{eq:usub}
-2\delta\int_{-\infty}^{\infty} \sech^2(t)\tanh^2(t)\, dt
= -2\delta \int_{-1}^{1} u^2\, du
= -\frac{4\delta}{3}.
\end{equation}

Recalling the expression \eqref{eq:fTrigPoly} for $f_{N,\omega}$ and using linearity of the integral to write the second integral in \eqref{eq:penultimate} as a sum of integrals, the desired result \eqref{eq:MelnikovTrig} is then a consequence of the following pair of identities (applying each identity $N$ times while taking $\nu = \omega k$, $k =1,2,\dots,N$)
\begin{equation}\label{eq:residue1}
\int_{-\infty}^{\infty} \sech(t)\tanh(t) \cos(\nu(t+t_0))\, dt
= 
-\pi \nu \sin(\nu t_0)
\sech\!\left(\frac{\pi\nu}{2}\right),
\end{equation}
\begin{equation}\label{eq:residue2}
\int_{-\infty}^{\infty} \sech(t)\tanh(t) \sin(\nu(t+t_0))\, dt
= 
 \pi \nu \cos(\nu  t_0)
\sech\!\left(\frac{\pi\nu}{2}\right).
\end{equation}
A detailed derivation of \eqref{eq:residue1} and \eqref{eq:residue2} using residues is provided in the Appendix.
\end{proof}

\subsection{The Kac-Rice formula}

We will need the Kac-Rice formula, a fundamental result from the study of zeros of random functions.
There are versions of the Kac-Rice formula that hold in more general settings, but we will just need the following univariate Gaussian formulation from \cite[Thm. 3.2]{AzaisWscheborbook}.

\begin{thm}[Kac-Rice formula]\label{thm:KR}
Let $f:I \rightarrow \RR$ be a random function with $I \subset \RR$ an interval.
Suppose the following conditions are satisfied
\begin{itemize}
    \item[(i)] $f$ is Gaussian,
    \item[(ii)] $f$ is almost surely of class $C^1$,
    \item[(iii)] For each $t \in I$, the random vector $(f(t),f'(t))$ has a nondegenerate distribution,
    \item[(iv)] Almost surely $f$ has no degenerate zeros.
\end{itemize}
Then 
\be\label{eq:KRformula}
\EE |\{t \in I:f(t) =0\} | = \int_I \int_\RR |B| \rho_t(0,B) dB dt,
\ee
where $ \rho_t(A,B)$ denotes the
joint probability density function
of $A=f(t)$ and $B=f'(t)$.
\end{thm}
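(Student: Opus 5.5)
The plan is to reduce the statement to Kac's deterministic counting formula and then pass expectations through a limit. Since $I$ may be assumed compact (for a general interval, exhaust it by compact subintervals and use monotone convergence on both sides), and since by (iii) $f(t)$ has a nondegenerate Gaussian law at each endpoint, almost surely $f$ does not vanish at the endpoints of $I$. By (iv), almost surely every zero is simple, so the zeros are isolated and finite in number; write $N_0$ for their count.

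For such a sample path, the deterministic identity is
\[
N_0=\lim_{\delta\to0}\frac{1}{2\delta}\int_I |f'(t)|\,\mathbf 1_{\{|f(t)|<\delta\}}\,dt .
\]
It holds because for small $\delta$ the set $\{|f|<\delta\}$ splits into $N_0$ short intervals around the zeros, on each of which $f$ is monotone and sweeps $(-\delta,\delta)$ exactly once. The integral in the formula is then computed directly. By Tonelli and (iii),
\[
\EE\Big[\frac{1}{2\delta}\int_I |f'|\mathbf 1_{\{|f|<\delta\}}\,dt\Big]=\int_I \frac{1}{2\delta}\int_{-\delta}^{\delta}\int_\RR |B|\,\rho_t(A,B)\,dB\,dA\,dt .
\]
The right-hand side tends to $\int_I\int_\RR |B|\rho_t(0,B)\,dB\,dt$ as $\delta\to0$. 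This uses that $\rho_t$ is a Gaussian density whose covariance matrix is continuous in $t$ (by (ii) and Gaussianity) and nondegenerate on the compact $I$. Hence the covariance determinants are bounded below, and dominated convergence applies. Fatou's lemma applied to the counting formula then gives the upper bound $\EE N_0\le \int_I\int_\RR|B|\rho_t(0,B)\,dB\,dt$.

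The main obstacle is the reverse inequality, since Fatou only yields one direction. I would proceed by truncation. Fix $\eta>0$ and $K>0$, and let $E_{\eta,K}$ be the event that $\|f'\|_\infty\le K$, that every zero $t$ of $f$ has $|f'(t)|\ge\eta$, and that the modulus of continuity of $f'$ on $I$ is at most some fixed function $m_K$ with $m_K(h)\to0$. On $E_{\eta,K}$ the zeros are separated by a distance depending only on $(\eta,K)$, so $N_0$ is bounded by a deterministic constant. The $\delta$-approximants are likewise uniformly bounded there, by the level-crossing (Banach indicatrix) interpretation $\frac{1}{2\delta}\int_{-\delta}^{\delta}N_u\,du$, where $N_u$ is stable for $|u|$ small on this event. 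Dominated convergence then gives equality of $\EE[N_0\mathbf 1_{E_{\eta,K}}]$ with the limit of the truncated $\delta$-integrals. One then lets $K\to\infty$ and $\eta\to0$, so that $E_{\eta,K}$ increases to a full-measure event by (ii) and (iv). Monotone convergence recovers $\EE N_0$ on the left, and the truncated integrals increase to the full Kac–Rice integral on the right. The delicate part is checking that the truncated $\delta$-integrals converge to something at least as large as the untruncated integral minus an error vanishing with $(\eta,K)$.

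An alternative for the lower bound, which I would try if the truncation becomes messy, is the level-averaging argument. The co-area identity with Tonelli gives, with no limit involved,
\[
\int_{-\delta}^{\delta}\EE N_u\,du=\int_{-\delta}^{\delta} g(u)\,du,\qquad g(u):=\int_I\int_\RR|B|\rho_t(u,B)\,dB\,dt .
\]
Since $g$ is continuous, $\EE N_u=g(u)$ for almost every $u$. One then transfers this to $u=0$ using the almost sure convergence $N_u\to N_0$, which follows from the simplicity of the zeros, together with uniform integrability of $\{N_u\}_{|u|<\delta}$. Proving that uniform integrability, for instance from a second-moment bound on the derivative, is the step I expect to require the most care.
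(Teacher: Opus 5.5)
Your upper bound is sound. On the full-measure event you describe, Kac's counting formula holds pathwise, and the $\delta$-averaged expectations converge to $\int_I\int_\RR|B|\rho_t(0,B)\,dB\,dt$ by uniform nondegeneracy on compact $I$. Fatou then gives $\EE N_0\le\int_I\int_\RR|B|\rho_t(0,B)\,dB\,dt$. (The paper itself gives no proof; it quotes the result from \cite{AzaisWscheborbook}.)

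The gap is the reverse inequality, which you never prove. Both of your routes reduce it to uniform integrability of $\phi_\delta:=\frac{1}{2\delta}\int_I|f'|\mathbf 1_{\{|f|<\delta\}}\,dt$ (resp.\ of $\{N_u\}_{|u|<\delta}$). But you already have $\phi_\delta\to N_0$ a.s.\ and $\EE\phi_\delta\to\int_I\int_\RR|B|\rho_t(0,B)\,dB\,dt$. So that uniform integrability is equivalent, by Vitali/Scheff\'e, to the theorem itself, and the reduction is circular unless it is established independently. A moment bound on $f'$ does not supply it, since $\int_{-\delta}^{\delta}N_u\,du\le|I|\,\|f'\|_\infty$ controls crossings only with a factor blowing up like $\delta^{-1}$.

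The truncation route also contains a false step. You claim $\phi_\delta$ is uniformly bounded on $E_{\eta,K}$ because ``$N_u$ is stable for $|u|$ small,'' but $E_{\eta,K}$ constrains the slope only at the zeros. A path in $E_{\eta,K}$ can stay in $[\delta/4,3\delta/4]$ on a subinterval and oscillate there, even with $|f''|$ bounded, of order $\delta^{-1/2}$ times, so $\sup_{E_{\eta,K}}\phi_\delta\to\infty$. Even granting domination on $E_{\eta,K}$, you would get
\[
\EE[N_0\mathbf 1_{E_{\eta,K}}]=\lim_\delta\EE[\phi_\delta\mathbf 1_{E_{\eta,K}}]\le\lim_\delta\EE\phi_\delta,
\]
which is the upper bound again. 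The lower bound needs $\limsup_{\delta\to0}\EE[\phi_\delta\mathbf 1_{E_{\eta,K}^c}]\to0$, which is exactly the interchange of limits you flag as delicate and leave open.

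The missing idea is a pathwise lower bound that needs no integrability. Take a partition $a=t_0<\cdots<t_m=b$ of $I=[a,b]$. The intermediate value theorem gives $N_0\ge\#\{j:f(t_j)f(t_{j+1})<0\}$ for every path, so $\EE N_0\ge\sum_j\PP\bigl(f(t_j)f(t_{j+1})<0\bigr)$. Set $Y_h(t)=(f(t+h)-f(t))/h=\int_0^1 f'(t+sh)\,ds$. The Gaussian pair $(f(t),Y_h(t))$ has mean and covariance continuous in $(t,h)$ up to $h=0$, where it is $(f(t),f'(t))$. It is therefore uniformly nondegenerate for small $h$. Splitting according to the sign of $Y_h$ gives
\[
\frac1h\,\PP\bigl(f(t)f(t+h)<0\bigr)=\int_\RR|y|\int_0^1\rho_{t,h}(-shy,y)\,ds\,dy\longrightarrow\int_\RR|B|\,\rho_t(0,B)\,dB
\]
uniformly in $t$, where $\rho_{t,h}$ is the density of $(f(t),Y_h(t))$; domination comes from uniform Gaussian decay in $y$. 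Summing over the partition and letting the mesh tend to zero yields $\EE N_0\ge\int_I\int_\RR|B|\rho_t(0,B)\,dB\,dt$. Together with your Fatou bound, this proves the theorem. This is the classical discretization argument (Cram\'er--Leadbetter), and it makes both the truncation and the co-area/uniform-integrability detours unnecessary.
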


Furthermore, if the process is centered, we can express \eqref{eq:KRformula} in terms of the covariance kernel $K(r,t):=\EE f(r) f(t)$, namely \cite{EdelmanKostlan95},
\be\label{eq:EKform}
\EE |\{t \in I:f(t) =0\} | = \frac{1}{\pi} \int_I \sqrt{  \left.  \frac{\partial^2}{\partial t \partial r} \log K(r,t) \right|_{r=t=\tau}} d\tau.
\ee
This identity is based on the fact that the
joint density $\rho_t(A,B)$ of the Gaussian pair $A=f(t), B=f'(t)$  can be expressed in terms of the covariance matrix of $f(t)$ and $f'(t)$ which in turn can be computed from evaluation along the diagonal $r=t$ of appropriate partial derivatives of $K(r,t)$.

Applying \eqref{eq:EKform} to the case of random trigonometric polynomials yields the following result.

\begin{thm}[Random trigonometric polynomials with weights]\label{thm:RandTrigWeights}
Let
\begin{equation}\label{eq:TrigPoly}
T_n(t)
= \sum_{k=0}^{n} \sigma_{k}\, a_k \cos(\omega k t)
  + \sum_{k=1}^{n} \sigma_{k}\, b_k \sin(\omega k t), 
\end{equation}
where $a_k, b_k$ are i.i.d.\ $N(0,1)$ random variables,
and $\sigma_k$ is a deterministic sequence of real weights, not all zero.

Let $N_{[a,b]}(T_n)$ denote the number of real zeros of $T_n$ in the interval $[a,b]$.
Then
\begin{equation}\label{eq:KacRice}
\mathbb{E}\,N_{[a,b]}(T_n)
= \frac{b-a}{\pi}\,
\sqrt{\frac{\sum_{k=1}^n (\omega k)^2\,\sigma_{k}^2}{\sum_{k=0}^n \sigma_{k}^2}}.
\end{equation}

\end{thm}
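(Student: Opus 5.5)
The plan is to apply the Edelman--Kostlan form \eqref{eq:EKform} of the Kac--Rice formula directly to $T_n$, after first checking the hypotheses of Theorem \ref{thm:KR}. The process $T_n$ is a finite linear combination of i.i.d.\ Gaussians with deterministic smooth coefficients, so it is centered Gaussian with $C^\infty$ samples; this gives (i) and (ii).

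The first real step is the covariance kernel. By independence and $\EE a_k b_j = 0$,
\[
K(r,t) = \EE T_n(r)T_n(t) = \sum_{k=0}^n \sigma_k^2\big(\cos(\omega k r)\cos(\omega k t) + \sin(\omega k r)\sin(\omega k t)\big) = \sum_{k=0}^n \sigma_k^2 \cos(\omega k (r-t)),
\]
with the $k=0$ sine term absent, which changes nothing since $\sin 0 = 0$. Thus $K$ is stationary, $K(r,t) = \kappa(r-t)$ with $\kappa(s) = \sum_k \sigma_k^2\cos(\omega k s)$. This gives $\kappa(0) = \sum_{k=0}^n\sigma_k^2 > 0$, $\kappa'(0) = 0$, and $-\kappa''(0) = \sum_{k=1}^n (\omega k)^2\sigma_k^2$.

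Next I would compute the mixed log-derivative on the diagonal:
\[
\partial_t\partial_r \log \kappa(r-t) = -\frac{\kappa''(s)\kappa(s) - \kappa'(s)^2}{\kappa(s)^2}.
\]
At $s=0$ this equals $-\kappa''(0)/\kappa(0) = \sum_{k\ge1}(\omega k)^2\sigma_k^2 / \sum_{k\ge0}\sigma_k^2$, which is independent of $\tau$. Integrating over $[a,b]$ then gives exactly \eqref{eq:KacRice}.

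The main obstacle is verifying (iii) and (iv), and one degenerate case needs separate handling. Stationarity gives $\mathrm{Var}\,T_n(t) = \kappa(0) > 0$, $\mathrm{Var}\,T_n'(t) = -\kappa''(0)$, and $\mathrm{Cov}(T_n(t), T_n'(t)) = -\kappa'(0) = 0$. So the pair $(T_n(t),T_n'(t))$ is nondegenerate precisely when some $\sigma_k \neq 0$ with $k\ge1$ (assuming $\omega \neq 0$).

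If only $\sigma_0 \neq 0$, then $T_n \equiv \sigma_0 a_0$ is a nonzero constant almost surely. It has no zeros, which agrees with the right-hand side of \eqref{eq:KacRice} being $0$, so this case can be dispatched directly.

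Otherwise, (iv) follows from Bulinskaya's lemma (as in the Remark in the introduction): $T_n$ has smooth samples and its point evaluations have densities bounded uniformly in $t$, because the variance $\kappa(0)$ is constant and positive. Hence zeros are almost surely nondegenerate. Alternatively, since $T_n(t)$ and $T_n'(t)$ are independent, one can run Kac--Rice directly: $\rho_t(0,B) = \phi_{\kappa(0)}(0)\,\phi_{-\kappa''(0)}(B)$, and $\int|B|\rho_t(0,B)\,dB = \frac{1}{\pi}\sqrt{-\kappa''(0)/\kappa(0)}$. This reproduces the same constant and serves as a check on the derivation via \eqref{eq:EKform}.
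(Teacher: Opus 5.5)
Your proposal is correct and follows the paper's route. The paper obtains this theorem by applying the Edelman--Kostlan form \eqref{eq:EKform} to the stationary kernel $K(r,t)=\sum_{k=0}^n\sigma_k^2\cos(\omega k(r-t))$, exactly as you do; your check of hypotheses (iii)--(iv) and your separate treatment of the case where only $\sigma_0\neq 0$ supply details the paper leaves implicit.
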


\section{Results on the expected number of zeros}\label{sec:exp}

%\commE{Add statements of our first two results along with brief description of the proofs.}

%For the sake of what is to come, we will rewrite the Melnikov function \eqref{eq:MelnikovTrig} as 

Since the zero set of the Melnikov function \eqref{eq:MelnikovTrig} is invariant under scaling, we will rewrite it as
\begin{equation}\label{eq:Melnikov_N}
M_{N, \omega}(t_0)=\frac{2\sqrt{2}}{3\pi}\delta+\sum_{k=1}^N \omega k \sech \left(\frac{\pi \omega k}{2} \right) \big[a_k \sin (\omega k t_0)+ b_k \cos (\omega k t_0)  \big],
\end{equation}
where we have also absorbed the negative signs into $\delta, b_k$ using the symmetry of centered Gaussians.
We let
$$Z_{N, \omega}:= |\{t_0\in[0,T):M_{N,\omega}(t_0)=0\}|$$
denote the number of zeros of the Melnikov function $M_{N, \omega}(t_0)$ in a single period.

The following result provides an explicit formula for the average number of zeros of the Melnikov function. This will follow as a direct application of Theorems \ref{thm:Melnikov} and \ref{thm:KR}.

\begin{thm}\label{thm:expectation}
Consider the function \eqref{eq:Melnikov_N}. Let $a_k, b_k, {\delta}$ be independent and identically distributed (i.i.d) standard normal random variables. 
The expected number of zeros of the Melnikov function $M_{N, \omega}(t_0)$ on the interval $[0, \frac{2\pi}{\omega})$ is given by 
\be\label{eq:expectationN}
\EE \, Z_{N, \omega} = \frac{2}{\omega}\cdot \sqrt{\frac{\sum_{k=1}^{N} (\omega  k)^4 \sech^2(\frac{k\pi \omega}{2})}{8/(9\pi^2)+\sum_{k=1}^{N} (\omega k)^2\sech^2(\frac{k\pi \omega}{2})}}.
\ee
\end{thm}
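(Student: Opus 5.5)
The plan is to apply Theorem \ref{thm:RandTrigWeights} directly to \eqref{eq:Melnikov_N}. First I will match \eqref{eq:Melnikov_N} to the form \eqref{eq:TrigPoly}. The constant term $\frac{2\sqrt{2}}{3\pi}\delta$ corresponds to $k=0$ with weight $\sigma_0=\frac{2\sqrt2}{3\pi}$ and coefficient $a_0=\delta$. For $k\ge 1$, the weights are $\sigma_k=\omega k\sech(\pi\omega k/2)$. The only discrepancy is that $\delta$ multiplies a constant and the roles of $\sin$ and $\cos$ are swapped relative to \eqref{eq:TrigPoly}. Relabelling $a_k\leftrightarrow b_k$ for $k\ge1$ handles the swap, since these are i.i.d.\ standard normals. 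So $M_{N,\omega}$ is exactly a weighted random trigonometric polynomial $T_N$ with i.i.d.\ $N(0,1)$ coefficients.

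Before invoking the theorem, I will note that its hypotheses (ultimately those of Theorem \ref{thm:KR}, through \eqref{eq:EKform}) hold. The process is centered, Gaussian, and smooth. Its variance $\sigma_0^2+\sum_{k\ge1}\sigma_k^2$ is constant and positive. The covariance kernel is stationary, $K(r,t)=\sigma_0^2+\sum_{k}\sigma_k^2\cos(\omega k(r-t))$. Because of this, $M(t)$ and $M'(t)$ are uncorrelated, and $\mathrm{Var}\,M'(t)=\sum \omega^2k^2\sigma_k^2>0$ (since $\sech>0$), which gives nondegeneracy of $(M(t),M'(t))$. The absence of degenerate zeros follows from Bulinskaya's lemma, as already remarked in the introduction.

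Next I will evaluate \eqref{eq:KacRice} on $[a,b]=[0,2\pi/\omega)$, so that $(b-a)/\pi=2/\omega$. The numerator becomes $\sum_{k=1}^N(\omega k)^2\sigma_k^2=\sum(\omega k)^4\sech^2(\pi\omega k/2)$. The denominator becomes $\sigma_0^2+\sum_{k\ge1}\sigma_k^2=\frac{8}{9\pi^2}+\sum(\omega k)^2\sech^2(\pi\omega k/2)$. This is exactly \eqref{eq:expectationN}. The endpoint is irrelevant, since almost surely no zero falls at a given point.

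No real obstacle is expected. The only points needing care are the bookkeeping of the constant weight $\sigma_0^2=8/(9\pi^2)$ and justifying that the sign and sine/cosine relabellings preserve the joint distribution of the coefficients.
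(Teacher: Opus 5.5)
Your proposal is correct and takes the same approach as the paper, which proves the statement by a direct application of Theorem \ref{thm:RandTrigWeights} with weights $\sigma_0=2\sqrt{2}/(3\pi)$ and $\sigma_k=\omega k\sech(\pi\omega k/2)$ on an interval of length $2\pi/\omega$. Your explicit check of the Kac--Rice hypotheses (nondegeneracy of $(M,M')$ via stationarity, simple zeros via Bulinskaya's lemma) fills in details that the paper's one-line proof leaves implicit.
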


\begin{proof}
This follows immediately from applying the Kac-Rice formula stated in Theorem \ref{thm:RandTrigWeights} to the Melnikov function provided in \eqref{eq:Melnikov_N}
where we note that when the parameters in the perturbed system \eqref{eq:Duffing} are i.i.d. random Gaussians, the Melnikov function $M_{N,\omega}$ is indeed a random trigonometric polynomial of the form \eqref{eq:TrigPoly} with weights $\sigma_k=\omega k\sech(\frac{k\pi\omega}{2})$ for $k\geq 1$ and $\sigma_0=2\sqrt{2}/(3\pi)$.
\end{proof}

It follows from Theorem \ref{thm:expectation} that $\EE \,Z_{N,\omega}$ decays exponentially as $\omega \rightarrow \infty$, see the proof of Theorem \ref{thm:nozeros} below for a detailed estimate that will be used to further estimate the probability that $Z_{N,\omega}=0$, i.e., $M_{N,\omega}$ does not have any zeros.

\begin{cor} 
As $N\rightarrow \infty$, the expected number of zeros of \eqref{eq:MelnikovTrig} approaches a finite constant value $C_\omega$ (depending on $\omega$) given by 
\begin{equation}
C_\omega=\frac{2}{\omega}\cdot \sqrt{\frac{\sum_{k=1}^{\infty} (\omega  k)^4 \sech^2(\frac{k\pi \omega}{2})}{8/(9\pi^2)+\sum_{k=1}^{\infty} (\omega k)^2\sech^2(\frac{k\pi \omega}{2})}}.
\end{equation}
\end{cor}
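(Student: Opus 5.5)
The plan is to pass to the limit $N\to\infty$ directly in the explicit formula \eqref{eq:expectationN} of Theorem \ref{thm:expectation}. For fixed $\omega>0$, the expected number of zeros is
\[
\EE\, Z_{N,\omega}=\frac{2}{\omega}\sqrt{\frac{A_N}{8/(9\pi^2)+B_N}},
\qquad A_N=\sum_{k=1}^{N}(\omega k)^4\sech^2\!\left(\tfrac{k\pi\omega}{2}\right),\quad B_N=\sum_{k=1}^{N}(\omega k)^2\sech^2\!\left(\tfrac{k\pi\omega}{2}\right).
\]
Both sequences are nondecreasing in $N$, since every term is nonnegative. So it suffices to show that they converge to finite limits $A_\infty$ and $B_\infty$. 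Continuity of $(x,y)\mapsto \frac{2}{\omega}\sqrt{x/(8/(9\pi^2)+y)}$ on $[0,\infty)^2$ then gives the claim; note the denominator stays bounded below by $8/(9\pi^2)>0$.

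For convergence, use the elementary bound $\sech(x)=2/(e^x+e^{-x})\le 2e^{-x}$ for $x\ge 0$. This gives
\[
(\omega k)^4\sech^2\!\left(\tfrac{k\pi\omega}{2}\right)\le 4(\omega k)^4 e^{-\pi\omega k},
\]
and similarly for the squared-weight terms. The series $\sum_k k^4 e^{-\pi\omega k}$ and $\sum_k k^2 e^{-\pi\omega k}$ converge for any fixed $\omega>0$, for instance by the ratio test (the ratio tends to $e^{-\pi\omega}<1$). Hence $A_\infty$ and $B_\infty$ are finite, and the limit equals $C_\omega$ as stated.

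Finally, $C_\omega$ is a genuine finite positive constant: $A_\infty\ge \omega^4\sech^2(\pi\omega/2)>0$. There is no real obstacle in this argument. The only point requiring care is that the denominator is uniformly positive, which is guaranteed by the $\delta$-contribution $8/(9\pi^2)$, so the limit of the quotient is simply the quotient of the limits.
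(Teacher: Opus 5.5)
Your proposal is correct and follows the same route as the paper. Like the paper, you use the bound $\sech^2(k\pi\omega/2)\le 4e^{-k\pi\omega}$ to get convergence of both series, then pass to the limit in the formula of Theorem \ref{thm:expectation}; your remarks that the denominator stays above $8/(9\pi^2)$ and that the quotient is continuous spell out a step the paper leaves implicit.
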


%For the expected number of zeros result we will cite \cite{EdelmanKostlan95}.
\begin{proof}
Using the basic estimate $\sech^2(\frac{k\pi\omega}{2})\leq 4e^{-k\pi\omega}$, the terms of the series appearing in the numerator and denominator of \eqref{eq:expectationN} are bounded by $4(\omega k)^4 e^{-\pi k \omega}$ and $4(\omega k)^2 e^{-\pi k \omega}$ respectively. Since each of these sequences is summable, this shows absolute convergence of each of the series appearing in the numerator and denominator. 
\end{proof}

\begin{thm}
The limiting expectation $C_\omega$ satisfies
\begin{equation}
C_\omega \sim \sqrt{\frac{7}{5}} \cdot {\frac{2}{\omega}}, \quad (\omega \rightarrow 0).
\end{equation}

%%[statement about average number of zeros being arbitrarily large for appropriate choice of small $\omega>0$ and large $N$--perhaps this can be stated as there exists $\omega_0>0$ and $N_0>0$ such that for all $0<\omega<\omega_0$ and $N>N_0$--first choose $\omega_0$ and then see if we have uniform (in $\omega$) convergence to $C_\omega$ as $N \rightarrow \infty$?]

\end{thm}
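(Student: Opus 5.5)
The plan is to treat the two series in $C_\omega$ as Riemann sums. Multiply numerator and denominator inside the square root by $\omega$ and set $x_k=\omega k$, with mesh $\Delta x=\omega$. Then
\[
\omega\sum_{k=1}^{\infty}(\omega k)^4\sech^2\!\left(\tfrac{\pi\omega k}{2}\right)\to I_4:=\int_0^\infty x^4\sech^2\!\left(\tfrac{\pi x}{2}\right)dx,
\]
and in the same way $\omega\sum_{k\ge1}(\omega k)^2\sech^2(\pi\omega k/2)\to I_2:=\int_0^\infty x^2\sech^2(\pi x/2)\,dx$ as $\omega\to0$. The constant $8/(9\pi^2)$ in the denominator becomes $8\omega/(9\pi^2)\to 0$ after this multiplication. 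Hence $C_\omega\sim \frac{2}{\omega}\sqrt{I_4/I_2}$, and it remains to show $I_4/I_2=7/5$.

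The convergence of the Riemann sums needs justification because the domain is infinite. The functions $g_j(x)=x^j\sech^2(\pi x/2)$ are continuous, integrable, and dominated by $4x^je^{-\pi x}$, which is eventually decreasing. A standard bound then makes $\omega\sum_k g_j(\omega k)-\int_0^\infty g_j$ small. For example, $|\omega\sum_k g(\omega k)-\int_0^\infty g|\le \omega\,\mathrm{TV}(g)$, and $g_j$ has finite total variation on $[0,\infty)$ since $g_j'$ is integrable. This step is routine.

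To compute the integrals, substitute $u=\pi x/2$:
\[
I_j=(2/\pi)^{j+1}\int_0^\infty u^j\sech^2 u\,du.
\]
Next use $\sech^2u=4\sum_{n\ge1}(-1)^{n+1}n e^{-2nu}$, which gives
\[
\int_0^\infty u^j\sech^2u\,du = 4\,j!\sum_{n\ge1}(-1)^{n+1}\frac{n}{(2n)^{j+1}} = \frac{4\,j!}{2^{j+1}}\,\eta(j),
\]
where $\eta$ is the Dirichlet eta function. We have $\eta(2)=\pi^2/12$ and $\eta(4)=7\pi^4/720$. This gives $\int_0^\infty u^2\sech^2u\,du=\pi^2/12$ and $\int_0^\infty u^4\sech^2u\,du=\tfrac{4\cdot24}{32}\cdot\tfrac{7\pi^4}{720}=\tfrac{7\pi^4}{240}$. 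Therefore
\[
\frac{I_4}{I_2}=\frac{(2/\pi)^5\,(7\pi^4/240)}{(2/\pi)^3\,(\pi^2/12)}=\frac{4}{\pi^2}\cdot\frac{7\pi^2}{20}=\frac{7}{5},
\]
which yields $C_\omega\sim\sqrt{7/5}\cdot\frac{2}{\omega}$.

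The main obstacle is this evaluation of the two integrals; the Riemann-sum step is routine. The termwise integration of the alternating series can be justified with Abel summation or dominated convergence applied to partial sums. Alternatively, one could integrate by parts to get $\int u^j\sech^2u=j\int u^{j-1}(1-\tanh u)\,du$ type expressions and use known values of $\int_0^\infty u^{s-1}/(e^{2u}+1)\,du$.
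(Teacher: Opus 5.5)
Your proof is correct and has the same skeleton as the paper's. After multiplying through by $\omega$, both series become right-endpoint Riemann sums converging to $I_4$ and $I_2$, the constant $8\omega/(9\pi^2)$ drops out, and the ratio is $I_4/I_2=7/5$.

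You handle the two sub-steps differently. For the Riemann-sum limit, the paper rewrites each sum as the integral of a step function via monotone convergence. It then applies dominated convergence with the majorant $C\chi_{[0,x^*]}+f\chi_{(x^*,\infty)}$, which uses eventual monotonicity of the integrand. Your estimate $|\omega\sum_{k\ge1}g(\omega k)-\int_0^\infty g|\le\omega\,\mathrm{TV}(g)$, with $\mathrm{TV}(g)=\int_0^\infty|g'|<\infty$, is shorter and also quantitative: it gives $C_\omega=\sqrt{7/5}\cdot\frac{2}{\omega}\,(1+O(\omega))$.

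For the integrals, the paper quotes the Gradshteyn--Ryzhik formula in terms of Bernoulli numbers. You instead derive the values from the series for $\sech^2 u$ together with $\eta(2)$ and $\eta(4)$. This is essentially how that table entry is proved, via $\eta(2m)=(1-2^{1-2m})\zeta(2m)$ and Euler's formula for $\zeta(2m)$. Both routes give $I_2=2/(3\pi)$ and $I_4=14/(15\pi)$.

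One small simplification: for $j=2,4$ the termwise-integrated series $\sum_{n} n\,j!/(2n)^{j+1}$ converges absolutely. So Tonelli--Fubini justifies the interchange of sum and integral directly, and you do not need Abel summation or partial-sum domination.
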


\begin{proof}

First rewrite the sum
$$\sum_{k=1}^{\infty} (\omega k)^4\sech^2\bigg(\frac{k\pi\omega}{2}\bigg)\omega $$
as a sum of integrals: 
\be\label{eq:sumtointegral}
\sum_{k=1}^{\infty} (\omega k)^4\sech^2\bigg(\frac{k\pi\omega}{2}\bigg)\omega= \sum_{k=1}^{\infty} \int_0^\infty(\omega k)^4\sech^2\bigg(\frac{k\pi\omega}{2}\bigg) \chi_{[\omega(k-1), \omega k]}(x) dx
\ee
Since these integrands are all nonnegative, a standard corollary \cite[Ch. 2, Cor. 1.10]{Stein} of the monotone convergence theorem  gives 
\be\label{eq:MCT}
\sum_{k=1}^{\infty} (\omega k)^4\sech^2\bigg(\frac{k\pi\omega}{2}\bigg)\omega= \int_0^\infty\sum_{k=1}^{\infty}(\omega k)^4\sech^2\bigg(\frac{k\pi\omega}{2}\bigg) \chi_{[\omega(k-1), \omega k]}(x) dx.
\ee

The integrand in the line above is the approximation of the function $f(x) = x^4 \sech^2(\frac{\pi x}{2})$ via step functions. The function $f$ is continuous, bounded by a constant $C$, and monotone decreasing on  $(x^*,\infty)$ for some $x^*>0$. Since the approximation by step functions uses right endpoints, it is dominated by $f$ on $(x^*,\infty)$ and hence is dominated on $[0,\infty)$ by the integrable function $C \chi_{[0,x^*]} + f(x)\chi_{(x^*, \infty)}$.

%$$ g(x) := \begin{cases}
%C, \qquad 0\leq x\leq  x^*\\
%f(x), \qquad x > x^*
%\end{cases}$$

%We then note that the function 
%$$x^4 \sech^2(\frac{\pi x}{2})$$ 
%achieves a maximum at the value $x_\text{max}$ where $4-\pi x \tanh(\frac{\pi}{2}x_{\text{max}})=0$.  We then consider the integral from 0 to $x_\text{max}$ and from $x_\text{max}$ to $\infty$. From 0 to $x_\text{max}$, since $x^4 \sech^2(\frac{\pi x}{2})$ is bounded, we can swap the limit in. For the case of the integral from $x_\text{max}$ to $\infty$, we have that
%$$\bigg|\sum_{k=1}^{\infty}(\omega k)^4\sech^2(\frac{k\pi\omega}{2})\chi_{[\omega (k-1), \omega k]}(x)\bigg|\leq  x^4\sech^2(\frac{\pi x}{2}).$$
%Since $g(x)$ is $L_1$, 
%\commE{fix the step function argument, left side bounded by constant, right side by shift of function. }
%From $x=0$ to this maximum, the step function $\sum_{k=1}^{\infty}(\omega k)^4\sech^2(\frac{k\pi\omega}{2})\chi_{[\omega k, \omega(k+1)]}(x) d\mu$ will be an underestimate and for $x>x_{\text{max}}$, the step function is an overestimate. Then we can split the sum into the pieces before and after $x_{\text{max}}$. For the part after $x_{\text{max}}$, we can dominate the step function by $x^4\sech^2(\frac{\pi}{2}x)$ which is integrable on $\mathbb{R}$. Then by dominated convergence theorem we get: 
%$$\int_0^\infty\sum_{k=1}^{\infty}(\omega k)^4\sech^2\bigg(\frac{k\pi\omega}{2}\bigg) \chi_{[\omega(k-1), \omega k]}(x) d\mu\rightarrow \int_0^\infty x^4\sech^2(\frac{\pi x}{2})\chi_{[0, \infty)}(x)d\mu=\int_{0}^{\infty} x^4\sech^2(\frac{\pi x}{2})\  dx$$
%as $\omega\rightarrow 0$. 

We then have, by Lebesgue's dominated convergence theorem along with \eqref{eq:MCT},
\be\label{eq:numerator}
\lim_{\omega \rightarrow 0} \sum_{k=1}^{\infty} (\omega k)^4\sech^2\bigg(\frac{k\pi\omega}{2}\bigg)\omega =\int_{0}^\infty x^4\sech^2(\frac{\pi}{2}x)\ dx.\ee

A similar argument can be used to show that \be
\label{eq:denominator}
\sum_{k=1}^{\infty} (\omega k)^2\sech^2(\frac{k\omega\pi}{2})\omega\rightarrow \int_{0}^\infty x^2\sech^2(\frac{\pi}{2}x)\ dx
\ee 
as $\omega\rightarrow 0$.

Now we use \eqref{eq:numerator} and \eqref{eq:denominator} to establish the statement in the theorem, namely, to show 
$$\displaystyle \lim_{\omega \rightarrow 0} \frac{\omega}{2} C_\omega = \sqrt{\frac{7}{5}}.$$ 
We have
\begin{align*}
\lim_{\omega\rightarrow 0} \frac{\omega}{2} \cdot C_{\omega}&=\lim_{\omega\rightarrow 0} \sqrt{\frac{\sum_{k=1}^{\infty} (\omega  k)^4 \sech^2(\frac{k\pi \omega}{2})\omega}{8\omega/(9\pi^2)+\sum_{k=1}^{\infty} (\omega k)^2\sech^2(\frac{k\pi \omega}{2})\omega}}\\
&=\sqrt{\frac{\int_{0}^\infty x^4\sech^2(\frac{\pi}{2}x)\ dx}{\int_{0}^\infty x^2\sech^2(\frac{\pi}{2}x)\ dx}}\\
&= \sqrt{\frac{\frac{14}{15\pi}}{\frac{2}{3\pi}}} =\sqrt{\frac{7}{5}}
\end{align*}
where the integrals are evaluated using the table of integrals \cite[Sec. 3.527, Item 5]{Grad} which gives, for $a>0$ and any integer $m\geq1$,
$$\int_{0}^{\infty} \frac{x^{2m}}{\cosh^2(ax)}\ dx=\frac{(2^{2m}-2)\pi^{2m}}{(2a)^{2m}a}\cdot |B_{2m}|$$
where $B_{2m}$ is the associated Bernoulli number. 
\end{proof}

% \newpage
% \textbf{Other Way?}

% \begin{align*}
% \bigg|\sum_{k=1}^{\infty} (\omega k) ^2 \sech^2 (\frac{k\pi \omega}{2}) \omega - \int_{0}^ \infty
%  x^2 \sech^2( \frac{\pi}{2}x) \ dx\bigg|&\leq \bigg| \sum_{k=1}^{\infty} (\omega k) ^2 \sech^2 (\frac{k\pi \omega}{2}) \omega-\sum_{k=1}^{N} (\omega k) ^2 \sech^2 (\frac{k\pi \omega}{2}) \omega\bigg|
%  \\&+\bigg|\sum_{k=1}^{N} (\omega k) ^2 \sech^2 (\frac{k\pi \omega}{2}) \omega - \int_{0}^ {\omega N} x^2 \sech^2( \frac{\pi}{2}x) \ dx   \bigg|
%  \\&+ \bigg|\int_{0}^ {\omega N} x^2 \sech^2( \frac{\pi}{2}x) \ dx-\int_{0}^ {\infty} x^2 \sech^2( \frac{\pi}{2}x) \ dx     \bigg|
% \end{align*}
% Choose $\omega<<0$ so that the third difference is less than $\epsilon/3$.
% \\\\
% \textbf{Why is this possible:} 
% \\
% Rewrite this difference as: 
% \begin{align*}
% \bigg|\sum_{k=1}^{N} (\omega k) ^2 \sech^2 (\frac{k\pi \omega}{2}) \omega - \int_{0}^ {\omega N} x^2 \sech^2( \frac{\pi}{2}x) \ dx   \bigg|&=\bigg|\sum_{k=1}^{N} (\omega k) ^2 \sech^2 (\frac{k\pi \omega}{2}) \omega - \int_{0}^ {\omega N} x^2 \sech^2( \frac{\pi}{2}x) \ dx   \bigg|
% \end{align*}

\section{Results on the probability of the zero-free event}\label{sec:prob}

\subsection{Low probability of zeros at high frequency}

\begin{thm}[Overwhelming probability of no zeros at high frequency]
\label{thm:nozeros}
Let $\ZNomega$ denote the number of zeros of $M_{N, \omega}(t)$ over one period.
We have, uniformly in $N\geq $, 
$$
\mathbb{P}\bigl( Z_{N,\omega} \ge 1\bigr) = O(e^{-\omega}), \quad \text{as } \omega \rightarrow \infty.
$$
\end{thm}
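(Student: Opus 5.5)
The plan is to combine Markov's inequality with the explicit formula of Theorem~\ref{thm:expectation}. Since $Z_{N,\omega}$ is a nonnegative integer-valued random variable, we have
\[
\mathbb{P}(Z_{N,\omega}\ge 1)\le \EE\, Z_{N,\omega}.
\]
It therefore suffices to show that the right-hand side of \eqref{eq:expectationN} is $O(e^{-\omega})$ as $\omega\to\infty$, with an implied constant independent of $N\ge 1$.

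First, bound the denominator in \eqref{eq:expectationN} from below by the constant $8/(9\pi^2)$, discarding the nonnegative sum. This is the key point for uniformity in $N$: the constant term $\delta$ keeps the variance bounded away from zero, and this is where the high-frequency decay comes from. We obtain
\[
\EE\, Z_{N,\omega}\le \frac{2}{\omega}\cdot\frac{3\pi}{2\sqrt2}\Big(\sum_{k=1}^{N}(\omega k)^4\sech^2\big(\tfrac{k\pi\omega}{2}\big)\Big)^{1/2}.
\]
Next, use $\sech^2(x)\le 4e^{-2x}$, as in the proof of the Corollary, to bound the sum by $4\sum_{k\ge1}(\omega k)^4 e^{-\pi k\omega}$, which is independent of $N$. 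For $\omega\ge 1$, write $(\omega k)^4e^{-\pi k\omega}=e^{-\pi\omega}\cdot (\omega k)^4 e^{-\pi(k-1)\omega}$. Then the tail is controlled by
\[
\omega^4 e^{-\pi\omega}\sum_{k\ge1}k^4e^{-\pi(k-1)}\le C\,\omega^4e^{-\pi\omega},
\]
since $e^{-\pi(k-1)\omega}\le e^{-\pi(k-1)}$ when $\omega\ge1$.

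Taking the square root gives $\EE\, Z_{N,\omega}\le C'\,\omega^{-1}\cdot\omega^2 e^{-\pi\omega/2}=C'\omega e^{-\pi\omega/2}$. Since $\pi/2>1$, the factor $\omega e^{-(\pi/2-1)\omega}$ is bounded for $\omega\ge1$, so this is $O(e^{-\omega})$, uniformly in $N$. In fact the argument yields the sharper rate $O(\omega e^{-\pi\omega/2})$.

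There is no serious obstacle. The only points needing care are keeping every estimate independent of $N$, which is handled by passing to the full infinite series and using the constant lower bound on the denominator, and handling the polynomial factor $\omega^4$ inside the sum. The latter is absorbed by factoring out the first term's exponential, as shown above.
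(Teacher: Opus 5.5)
Your proposal is correct and follows the paper's proof essentially step for step. Both arguments apply Markov's inequality to the formula of Theorem~\ref{thm:expectation}, use the lower bound $8/(9\pi^2)$ on the denominator and $\sech^2(x)\le 4e^{-2x}$, and arrive at the same uniform-in-$N$ bound $C\omega e^{-\pi\omega/2}$. The only difference is cosmetic: you bound $\sum_k k^4e^{-\pi k\omega}$ by factoring out $e^{-\pi\omega}$ and comparing with $\omega=1$, whereas the paper uses the closed form $\xi(1+11\xi+11\xi^2+\xi^3)/(1-\xi)^5$ with $\xi=e^{-\pi\omega}$.
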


\begin{proof}
The proof will use Theorem \ref{thm:expectation} and Markov's inequality.

Recall  
\begin{align*}
\EE \,\ZNomega &=\frac{2}{\omega}\cdot \sqrt{\frac{\sum_{k=1}^{N} (\omega  k)^4 \sech^2(\frac{k\pi \omega}{2})}{8/(9\pi^2)+\sum_{k=1}^{N} (\omega k)^2\sech^2(\frac{k\pi \omega}{2})}}.
\end{align*}
Using the basic estimate $\displaystyle \sech^2\left(\frac{k\pi\omega}{2}\right)\leq 4e^{-k\pi\omega}$ in the numerator and bounding the denominator from below by $8/(9\pi^2)$, we obtain 
\begin{equation}\label{eq:expbound}
\EE \, \ZNomega \leq 3\pi \sqrt{2} \omega \sqrt{\sum_{k=1}^{\infty}  k^4(e^{-\pi\omega} )^k}.
\end{equation}

%\begin{align*}
%C_{N, \omega}&\leq \frac{2}{\omega} \sqrt{\sum_{k=1}^{N} (\omega k)^4 \sech^{2}(\frac{k\pi\omega}{2})}\\
%&\leq\frac{4}{\omega} \sqrt{\sum_{k=1}^{\infty} (\omega k)^4e^{-k\pi\omega} }\\
%&=4\omega \sqrt{\sum_{k=1}^{\infty}  k^4(e^{-\pi\omega} })^k\\
%\end{align*}
For the sake of notation, define $\xi:=e^{-\omega\pi}$. 
Using the elementary identity
$$\sum_{k=1}^{\infty}  k^4\xi^k= \left(\xi \frac{\partial}{\partial \xi} \right)^4 \frac{\xi}{1-\xi} =  \frac{\xi(1+11\xi+11\xi^2+\xi^3)}{(1-\xi)^5}$$
along with \eqref{eq:expbound}, we obtain
\begin{align*}
\EE \, \ZNomega&\leq3\pi \sqrt{2} \omega\sqrt{\frac{\xi(1+11\xi+11\xi^2+\xi^3)}{(1-\xi)^5}}\leq C\omega\sqrt{\xi}=C\omega e^{-\omega\pi/2}.
\end{align*}

Applying Markov's inequality then gives the result. 
$$\mathbb{P}\bigl(\ZNomega \ge 1\bigr)\leq \EE \, \ZNomega\leq C\omega e^{-\omega\pi/2} \leq C e^{-\omega}.$$
\end{proof}

\subsection{High probability of at least one zero for low frequency and high degree}

\begin{thm}[Overwhelming probability of zeros at low frequency] \label{thm:zero-free} Let $\gamma>0$. There exist constants $\omega_0>0$, $c>0$, and $C>0$ such that, whenever \[ 0<\omega<\omega_0 \qquad\text{and}\qquad N\omega>\gamma, \] we have \[ \PP\!\left( Z_{N, \omega} = 0 \right) \leq C\exp\!\left(-c\omega^{-1/4}\right). \] \end{thm}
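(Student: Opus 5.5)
The plan is to discretize: the event $\{Z_{N,\omega}=0\}$ implies that $M_{N,\omega}$ has the same sign at $m$ well-separated points of one period. I then bound this sign-agreement (orthant) probability by something close to $2\cdot 2^{-m}$, with $m\asymp \omega^{-1/4}$.

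\textbf{Setup.} Write $M_{N,\omega}(t)=c_0\delta+X(t)$ with $c_0=\frac{2\sqrt2}{3\pi}$. Here $X$ is a centered stationary Gaussian process with covariance
\[
r(s)=\sum_{k=1}^N g(\omega k)\cos(\omega k s),\qquad g(x)=x^2\sech^2\!\left(\tfrac{\pi x}{2}\right).
\]
By the Riemann-sum argument already used for $C_\omega$, together with $N\omega>\gamma$, we get
\[
r(0)\ge \omega^{-1}\int_0^{\gamma}g(x)\,dx\,(1+o(1))\ge c_1/\omega
\]
for $\omega<\omega_0$. Hence the common $\delta$-component has relative variance $c_0^2/(c_0^2+r(0))=O(\omega)$.

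\textbf{Correlation decay.} This is the main obstacle, because the bound must be uniform in $N$. The sharp cutoff at $k=N$ rules out exponential decay of correlations, and the sum is $2\pi/\omega$-periodic in $s$, so aliasing must be controlled. I would use Abel summation. Put $D_n(s)=\sum_{k=1}^n\cos(\omega ks)$, which satisfies $|D_n(s)|\le 1/|\sin(\omega s/2)|$. Since $g\ge 0$ has bounded total variation $V(g)$ on $[0,\infty)$, summation by parts gives, uniformly in $N$,
\[
|r(s)|\le \bigl(V(g)+\sup g\bigr)\,|\sin(\omega s/2)|^{-1}.
\]
Now take $m\asymp \omega^{-1/4}$ equally spaced points $t_1,\dots,t_m$ in $[0,2\pi/\omega)$. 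For $i\ne j$ the phase $\omega(t_i-t_j)$ stays at distance $\gtrsim 2\pi/m\asymp\omega^{1/4}$ from $2\pi\mathbb Z$, so $|r(t_i-t_j)|\lesssim\omega^{-1/4}$. The normalized vector $G_j=M_{N,\omega}(t_j)/\sqrt{c_0^2+r(0)}$ therefore has covariance $\Sigma=I+E$, where $E$ collects the cross-correlations $O(\omega^{3/4})$ and the rank-one $\delta$-part $O(\omega)$. This gives
\[
\|E\|_{op}\le m\,O(\omega^{3/4})+m\,O(\omega)=O(\omega^{1/2})=:\eta.
\]

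\textbf{Orthant estimate.} By symmetry, $\PP(Z_{N,\omega}=0)\le 2\,\PP(G_j>0\ \forall j)$. I split on the event $|G|^2\le 4m$.
\begin{itemize}
\item On $\{|G|^2\le 4m\}$, compare densities. We have $\varphi_\Sigma/\varphi_I=\det(\Sigma)^{-1/2}\exp\bigl(-\tfrac12 x^T(\Sigma^{-1}-I)x\bigr)$. Since $|\log\det\Sigma|\le Cm\eta$ and $\|\Sigma^{-1}-I\|\le 2\eta$, the ratio is at most $\exp(Cm\eta)=\exp(O(\omega^{1/4}))=O(1)$. This piece is thus at most $C\,2^{-m}$.
\item The remaining piece is $\PP(|G|^2>4m)$. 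Since $\mathrm{tr}\,\Sigma\le(1+\eta)m$ and $\|\Sigma\|\le 1+\eta$, a standard Gaussian chi-square concentration bound makes it at most $e^{-cm}$.
\end{itemize}
Combining the two pieces gives
\[
\PP(Z_{N,\omega}=0)\le C(2^{-m}+e^{-cm})\le C\exp(-c\,\omega^{-1/4}).
\]
The exponent $1/4$ comes from balancing $m$ against the pairwise correlation bound $O(\omega^{1/2}/(\omega m))$, so that $m\eta=O(1)$.
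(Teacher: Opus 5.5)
Your proposal is correct and follows the same route as the paper. You sample at $m\asymp\omega^{-1/4}$ equally spaced points of one period, bound the correlations by summation by parts against the Dirichlet kernel, show that the normalized covariance matrix is within $O(1/m)$ of the identity, and compare the orthant probability with $2^{-m}$. For that last step you use a density ratio on $\{|x|^2\le 4m\}$ plus a chi-square tail, while the paper uses the global bound $2^{-n}\lambda_{\max}^{n/2}/\sqrt{\det\Sigma}$, which needs no truncation.

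The one genuine refinement is that you control the Abel sum by the total variation of $g$ rather than by a Lipschitz bound. This gives $|r(s)|\lesssim 1/|\sin(\omega s/2)|$ uniformly in $N$, and removes the paper's split at $N_0\approx\omega^{-1}\log(1/\omega)$ together with its $\log(1/\omega)$ loss. Incidentally, your closing remark about ``balancing'' is slightly off: with your bounds $m\eta\lesssim\omega m^3$, so $m\asymp\omega^{-1/4}$ is comfortably sufficient rather than a balancing point. The same argument would already give the exponent $1/3$.
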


\begin{remark}
The condition $N\omega>\gamma$ requires the degree $N$ to grow
as $\omega\to0$. Thus, Theorem \ref{thm:zero-free}
concerns a joint low-frequency and large-degree limit. In contrast, the high-frequency conclusion of
Theorem \ref{thm:nozeros} holds for fixed degree $N$.
\end{remark}

In order to estimate the probability of the Melnikov function having no zeros, we evaluate the function at a collection of sufficiently spaced points over one period. The idea is that since a zero-free continuous function must have a constant sign, the event in which a function has no zeros is contained in the event in which all these evaluations are either all positive or all negative. To then estimate the probability of this event, we introduce the covariance kernel and correlation function of $M_{N, \omega}(t_0)$. 

The covariance kernel $K(s,t)$ of $M_{N, \omega}$ is given by
\begin{align*}
K(s,t)
&:= \mathbb{E}[M_{N, \omega}(s)M_{N, \omega}(t)]\\[4pt]
&= \frac{8}{9\pi^2}
+ \sum_{k=1}^{N} 
  (k \omega)^2 \,\mathrm{sech}^2\!\big(\tfrac{\pi k \omega}{2}\big) \cos(\omega k (s-t)),
\end{align*}
which depends only on $s-t$ (this reflects that our process is stationary). Normalizing, we obtain the correlation function
\begin{align*}
\Corr(s-t)&= \frac{K(s,t)}{\sqrt{K(s,s)\cdot K(t,t)}}\\ 
&= \frac{8/(9\pi^2)+\sum_{k=1}^N(k \omega)^2 \,\mathrm{sech}^2\!\big(\tfrac{\pi k \omega}{2}\big)\cos(\omega k(s-t))}{8/(9\pi^2)+\sum_{k=1}^N (k \omega)^2 \,\mathrm{sech}^2\!\big(\tfrac{\pi k \omega}{2}\big) }.
\end{align*}

\begin{figure}[ht]
\centering
\includegraphics[scale=0.4]{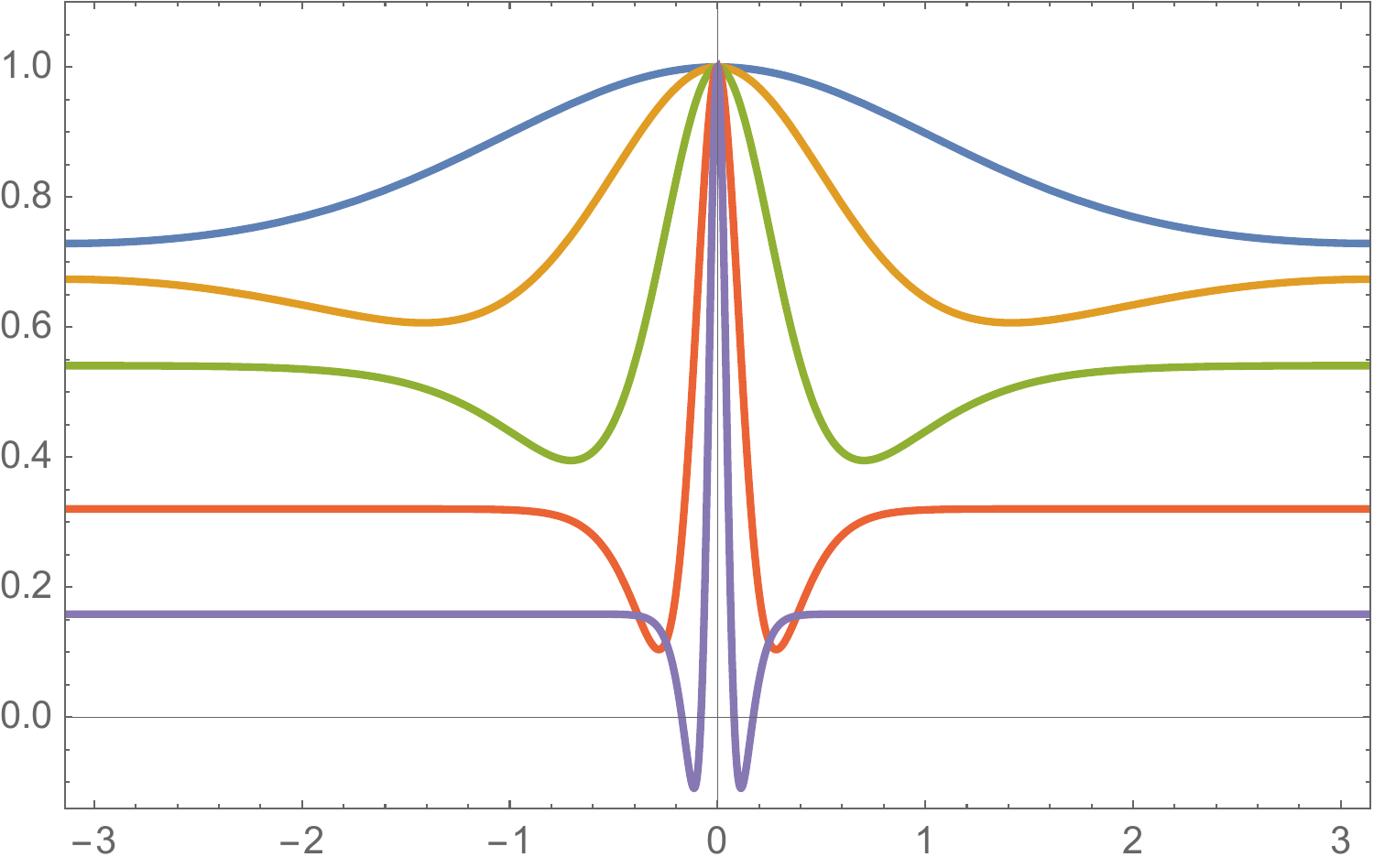}\\
\caption{Correlation function plots for $\omega=1,\ 0.5,\ 0.25,\ 0.1, \text{ and }0.04$ with $N=1000$.  Each plot shows a single period with the horizontal coordinate scaled to normalize the period to $2\pi$, i.e., the function plotted is $\Corr(\tau / \omega)$ for $\tau$ from $0$ to $2\pi$.}
\label{fig:corr}
\end{figure}

%The figure shows the correlation $\Corr(s,t)$ computed for $N=1000$ and different values of $\omega$, along with $s-t$ ranging from $0$ to $\frac{2\pi}{\omega}$. 

The following lemma provides the key estimate on the correlation which is seen to decay for sufficiently spaced points as $\omega \rightarrow 0$ (see also Figure \ref{fig:corr}).

\begin{lemma}[Correlation in the bulk]\label{lemma:corrinbulk}
Let $\gamma>0$.  There exist constants $C>0$, $\omega_0 > 0$ such that for 
$\tau\in\mathbb R\setminus\frac{2\pi}{\omega}\mathbb Z$, we have for $0<\omega<\omega_0$ and $N \omega > \gamma > 0$
\begin{equation}
|\Corr(\tau)| \leq \frac{C}{|\sin(\omega \tau /2)|} \omega \log (1/\omega).
\end{equation}
\end{lemma}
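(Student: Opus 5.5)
Write $\theta=\omega\tau$ and $c_k=(k\omega)^2\sech^2(\pi k\omega/2)$. Then
\[
\Corr(\tau)=\frac{8/(9\pi^2)+\sum_{k=1}^N c_k\cos(k\theta)}{8/(9\pi^2)+\sum_{k=1}^N c_k}.
\]
The plan is to bound the denominator from below by a multiple of $\omega^{-1}$, and the oscillatory sum in the numerator from above by $O(\log(1/\omega)/|\sin(\theta/2)|)$. Dividing gives the claim. The constant term $8/(9\pi^2)$ in the numerator is harmless: it is $O(\omega)$ after division, and $|\sin(\theta/2)|\le 1$ lets us absorb it into $C\omega\log(1/\omega)/|\sin(\theta/2)|$.

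For the denominator, we use the Riemann-sum comparison already carried out for \eqref{eq:denominator}. Only the terms with $k\omega\le\gamma$ are needed, and these are all present because $N\omega>\gamma$. We get $\omega\sum_{k\le N}c_k\ge \omega\sum_{k\omega\le \min(\gamma,1)}c_k\to\int_0^{\min(\gamma,1)}x^2\sech^2(\pi x/2)\,dx>0$. Hence for $\omega<\omega_0$ the denominator is at least $c_\gamma\,\omega^{-1}$. This is the only place the hypothesis $N\omega>\gamma$ enters.

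For the numerator, we apply Abel summation with the Dirichlet kernel partial sums $D_k(\theta)=\sum_{j=1}^k\cos(j\theta)$, which satisfy $|D_k(\theta)|\le 1/|\sin(\theta/2)|$. This gives
\[
\Bigl|\sum_{k=1}^N c_k\cos k\theta\Bigr|\le \frac{1}{|\sin(\theta/2)|}\Bigl(c_N+\sum_{k=1}^{N-1}|c_{k+1}-c_k|\Bigr).
\]
The function $g(x)=x^2\sech^2(\pi x/2)$ is bounded, increases on $[0,x^*]$ and decreases afterwards. Since $c_k=g(k\omega)$, the total variation $\sum|c_{k+1}-c_k|$ is at most $2\max g$, which is $O(1)$. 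Likewise $c_N\le\max g$. The numerator sum is therefore $O(1/|\sin(\theta/2)|)$. After dividing by the denominator bound, this yields $|\Corr(\tau)|\le C\omega/|\sin(\omega\tau/2)|$, which is even stronger than the stated bound since $\log(1/\omega)\ge1$ for small $\omega$.

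The main thing to verify carefully is the unimodality of $g$. It follows from $g'(x)=x\sech^2(\pi x/2)\bigl(2-\pi x\tanh(\pi x/2)\bigr)$: the factor $2-\pi x\tanh(\pi x/2)$ is strictly decreasing in $x$, so $g'$ changes sign exactly once. If instead one wanted a cruder argument without monotonicity, one would bound $|c_{k+1}-c_k|\le\omega\sup|g'|$ only over $k\le\omega^{-1}\log(1/\omega)$ and use the exponential tail of $c_k$ beyond that range. This route is presumably where the $\log(1/\omega)$ factor in the statement comes from, and it produces the stated bound as well.
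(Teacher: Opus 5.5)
Your argument is correct. It proves the sharper bound $|\Corr(\tau)|\le C\omega/|\sin(\omega\tau/2)|$, and it differs from the paper's proof in how the variation term from summation by parts is controlled.

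The paper applies Lemma~\ref{lemma:sumparts}, which bounds each increment $|F((k+1)\omega)-F(k\omega)|$ only by the Lipschitz estimate $M\omega$. The variation term therefore grows like $M\omega$ times the number of terms. To keep this under control uniformly in $N$, the paper truncates at $N_0=\min\{N,\lceil\omega^{-1}\log(1/\omega)\rceil\}$ and handles the tail separately with $F(x)\le e^{-x}$. The length of the truncated range is exactly the source of the $\log(1/\omega)$ factor, as you guessed.

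You instead bound the variation term by the total variation of $g$ on $[0,\infty)$. This is finite: at most $2\max g$ by unimodality, or simply at most $\int_0^\infty|g'|$. So a single Abel summation over all $1\le k\le N$ works uniformly in $N$, with no split and no logarithmic loss.

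For the denominator, the paper uses that $F$ is increasing on $(0,\gamma^*)$, so right-endpoint Riemann sums overestimate $\int_0^{\omega_0}F$. This yields the explicit choice $\omega_0=\gamma^*/2$. Your Riemann-sum convergence argument is equally valid, just less explicit about $\omega_0$.

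The paper's route packages a reusable lemma that needs only a bound on $F'$ and $F(0)=0$, together with exponential decay for the tail. Yours is shorter and sharper. Since it needs only $g'\in L^1$, it transfers just as easily to the weights in Section~\ref{sec:concl}. The extra precision is not needed for Theorem~\ref{thm:zero-free}, where the logarithm is absorbed into $\hat C\omega^{1/2}$.
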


As part of the proof we require an elementary result using summation by parts.

\begin{lemma}[Summation by parts estimate for oscillatory sums]\label{lemma:sumparts}
Let $F \in C^1([0,\infty))$ with $\|F'\|_{\infty} \le M$, and $F(0)=0$.  Then for $\omega>0$ and $\tau\in\mathbb R\setminus\frac{2\pi}{\omega}\mathbb Z$
\[
\left| \sum_{k=0}^{N_0} F(k\omega)\cos(k \tau\omega) \right|
\le \frac{2}{|\sin(\tau\omega /2)|} \left( MN_0\omega  \right).
\]
\end{lemma}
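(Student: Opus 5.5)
We will prove Lemma \ref{lemma:sumparts} by Abel summation against the Dirichlet-type partial sums of the cosine. Write $\theta := \tau\omega$, which by hypothesis is not a multiple of $2\pi$, so that $\sin(\theta/2)\neq 0$. Set
\[
D_k := \sum_{j=0}^{k} \cos(j\theta), \qquad k\ge 0,
\]
and $D_{-1}:=0$.

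The first step is a uniform bound on these partial sums. The standard telescoping identity $2\sin(\theta/2)\cos(j\theta) = \sin((j+\tfrac12)\theta) - \sin((j-\tfrac12)\theta)$ gives
\[
D_k = \frac{1}{2} + \frac{\sin((k+\tfrac12)\theta)}{2\sin(\theta/2)}.
\]
Since $|\sin(\theta/2)|\le 1$, the term $\tfrac12$ is at most $\frac{1}{2|\sin(\theta/2)|}$. Hence
\[
|D_k| \le \frac{1}{|\sin(\theta/2)|} \quad \text{for all } k.
\]

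The second step is summation by parts. Write $\cos(k\theta) = D_k - D_{k-1}$, so that
\[
\sum_{k=0}^{N_0} F(k\omega)\cos(k\theta) = F(N_0\omega) D_{N_0} - \sum_{k=0}^{N_0-1} \bigl(F((k+1)\omega) - F(k\omega)\bigr) D_k .
\]
By the mean value theorem, each increment satisfies $|F((k+1)\omega)-F(k\omega)|\le M\omega$. Since $F(0)=0$, the boundary term satisfies $|F(N_0\omega)| \le M N_0\omega$. Combining these with the bound on $D_k$, the boundary term contributes at most $MN_0\omega/|\sin(\theta/2)|$. The remaining sum has $N_0$ terms, each at most $M\omega/|\sin(\theta/2)|$, so it contributes at most $MN_0\omega/|\sin(\theta/2)|$.

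Adding the two contributions gives the claimed bound
\[
\left| \sum_{k=0}^{N_0} F(k\omega)\cos(k \tau\omega) \right| \le \frac{2MN_0\omega}{|\sin(\tau\omega/2)|}.
\]

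No step here is genuinely hard. The only point requiring care is keeping the constant at $2$: the extra $\tfrac12$ in $D_k$ must be absorbed into $\frac{1}{2|\sin(\theta/2)|}$ rather than bounded separately, and the boundary term $F(N_0\omega)D_{N_0}$ must be controlled using $F(0)=0$, which is exactly why that hypothesis is imposed.
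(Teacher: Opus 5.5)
Your proof is correct and follows the paper's argument essentially line for line. You use the same Dirichlet-kernel bound $|D_k|\le 1/|\sin(\tau\omega/2)|$, the same Abel summation identity, and the same mean-value-theorem estimates, both for the increments and for the boundary term via $F(0)=0$.
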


\begin{proof}[Proof of Lemma \ref{lemma:sumparts}]
Set $a_k = F(k\omega)$ and $b_k = \cos(k\omega\tau)$. Define
\begin{equation}
\label{eq:Bk}
B_k := \sum_{j=0}^k b_j = \sum_{j=0}^k \cos(j\tau\omega).
\end{equation}

Recall the elementary trigonometric identity  related to the Dirichlet kernel \cite[Sec. 15.2]{bruckner1997real}:
\begin{equation}
\sum_{j=0}^k \cos(j \theta) = \frac{\sin((k+\frac{1}{2})\theta)+ \sin(\theta/2)}{2\sin(\theta/2)}.
\end{equation}

Applying this to \eqref{eq:Bk} and using the triangle inequality, we get the bound
\begin{equation}\label{eq:bound}
|B_k| \le \frac{1}{|\sin(\tau\omega/2)|}, \quad \tau\in\mathbb R\setminus\frac{2\pi}{\omega}\mathbb Z.
\end{equation}
Using summation by parts \cite[Thm. 3.41]{BabyRudin}, which states
\[
\sum_{k=0}^{N_0} a_k b_k
= a_{N_0} B_{N_0} - \sum_{k=0}^{{N_0}-1} (a_{k+1}-a_k)\, B_k,
\]
we obtain
\be\label{eq:SumPartsBound}
\left| \sum_{k=0}^{N_0} a_k b_k \right|
\le \frac{1}{|\sin(\tau\omega/2)|} \left( |a_{N_0}| + \sum_{k=0}^{{N_0}-1} |a_{k+1}-a_k| \right).
\ee

By the mean value theorem, we have
\[
|a_{k+1}-a_k| = |F((k+1)\omega) - F(k\omega)| \le M \omega,
\]
and
\[
|a_{N_0}| = |F(N_0\omega)| \le M N_0 \omega.
\]
Applying these estimates to \eqref{eq:SumPartsBound}, we obtain
\[
\left| \sum_{k=0}^{N_0} a_k b_k \right|
\le \frac{1}{|\sin(\tau\omega/2)|} 2 M N_0 \omega,
\]
as desired.
\end{proof}

\begin{proof}[Proof of Lemma \ref{lemma:corrinbulk}]
Recall
\be\label{eq:Corr}
\Corr(\tau)= \frac{8/(9\pi^2)+\sum_{k=1}^N (k \omega)^2 \,\mathrm{sech}^2\!\big(\tfrac{\pi k \omega}{2}\big) \cos(\omega k\tau)}{8/(9\pi^2)+\sum_{k=1}^N (k \omega)^2 \,\mathrm{sech}^2\!\big(\tfrac{\pi k \omega}{2}\big) },
\ee
which we write as (multiplying the numerator and denominator by $\omega$ and replacing $8/(9\pi^2)$ with $\alpha_0$)
\be\label{eq:CorrRiemann}
\Corr(\tau)= \frac{\omega\alpha_0+\sum_{k=1}^N (k \omega)^2  \,\mathrm{sech}^2\!\big(\tfrac{\pi k \omega}{2}\big) \cos(\omega k\tau) \omega }{\omega\alpha_0+\sum_{k=1}^N (k \omega)^2 \,\mathrm{sech}^2\!\big(\tfrac{\pi k \omega}{2}\big) \omega}.
\ee

Denote $F(x)=x^2\sech^2(\pi x/2)$. 
First we estimate the denominator of the correlation function using the fact that $F(x)$
is strictly increasing on some interval providing a simple estimate for the denominator of \eqref{eq:Corr} viewed as a Riemann sum.  Namely, $F(x)$ is increasing on an interval
$(0,\gamma^*)$ for some $0<\gamma^* \leq \gamma$.  Take $\omega_0 = \gamma^*/2$
so that for $0<\omega < \omega_0$ there exists $N^*$ such that $\omega_0 = \gamma^*/2 < N^* \omega < \gamma^*$.  Then 
we have
\begin{align*}
\sum_{k=1}^{N} F(k\omega)\omega \geq \sum_{k=1}^{N^*} F(k\omega)\omega >\int_{0}^{\omega_0} F(x)\ dx>0,
\end{align*}
since the second sum gives a Riemann sum with right end points which is an overestimate for the integral since our integrand strictly increases over the interval $(0, \gamma^*)$.

Let
\[
N_0=\min\left\{N,\left\lceil\omega^{-1}\log(1/\omega)\right\rceil\right\},
\qquad
C_0=\left(\int_0^{\omega_0}F(x)\,dx\right)^{-1}.
\]
Splitting the sum at $N_0$, we obtain
\begin{equation}\label{eq:corr-split}
|\Corr(\tau)|
\leq C_0\Bigg(
\omega\alpha_0
+\omega\left|\sum_{k=1}^{N_0}F(k\omega)\cos(k\omega\tau)\right|
+\omega\left|\sum_{k=N_0+1}^{N}F(k\omega)\cos(k\omega\tau)\right|
\Bigg),
\end{equation}
where the tail sum is empty when $N_0=N$.

To bound the first sum, we apply Lemma \ref{lemma:sumparts}
with $F$ as above and $M$ a bound on its derivative. Since
$N_0\leq\omega^{-1}\log(1/\omega)+1$, this gives
\begin{equation}\label{eq:first-sum}
\omega\left|\sum_{k=1}^{N_0}F(k\omega)\cos(k\omega\tau)\right|
\leq \frac{2M\omega^2N_0}{|\sin(\tau\omega/2)|}
\leq \frac{2M\omega\bigl(\log(1/\omega)+\omega\bigr)}
{|\sin(\tau\omega/2)|}.
\end{equation}

To bound the tail in \eqref{eq:corr-split} when $N_0<N$, we use
$F(x)\leq 4x^2e^{-\pi x}\leq e^{-x}$ for $x>0$, obtaining
\[
\omega\left|\sum_{k=N_0+1}^{N}F(k\omega)\cos(k\omega\tau)\right|
\leq \omega\sum_{k=N_0+1}^{\infty}e^{-k\omega}
=e^{-N_0\omega}\frac{\omega}{e^\omega-1}.
\]
In this case,
$N_0=\lceil\omega^{-1}\log(1/\omega)\rceil$, so
$e^{-N_0\omega}\leq\omega$. Since $\omega/(e^\omega-1)<1$,
we conclude that
\begin{equation}\label{eq:tail}
\omega\left|\sum_{k=N_0+1}^{N}F(k\omega)\cos(k\omega\tau)\right|
\leq\omega.
\end{equation}
This bound trivially holds when the tail is empty.

Combining \eqref{eq:corr-split}, \eqref{eq:first-sum},
and \eqref{eq:tail}, we obtain
\[
|\Corr(\tau)|
\leq C_0\left(
\omega\alpha_0+
\frac{2M\omega\bigl(\log(1/\omega)+\omega\bigr)}
{|\sin(\tau\omega/2)|}
+\omega
\right)
\leq \frac{C\omega\log(1/\omega)}
{|\sin(\tau\omega/2)|},
\]
for sufficiently small $\omega>0$, as desired.
\end{proof}

We will need a special case of the well known Gershgorin circle theorem \cite{BrMe94} which we state as a lemma.  The result holds more generally for matrices that are not necessarily symmetric (where the intervals become disks in the complex plane).

\begin{lemma}[Gershgorin circle theorem for real symmetric matrices]\label{lemma:Gershgorin}
Let $A=(a_{ij})$ be a real symmetric $n\times n$ matrix. Then the eigenvalues of $A$ belong to the union of intervals
$
\bigcup_{j=1}^n [a_{jj}-R_j,\; a_{jj}+R_j],
$
where
$
R_j=\sum_{i\neq j}|a_{ij}|.
$
\end{lemma}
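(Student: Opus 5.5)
The approach is the standard eigenvector argument; symmetry is used only to guarantee that the eigenvalues are real and that row sums and column sums of off-diagonal entries coincide. Since $A$ is real symmetric, the spectral theorem says every eigenvalue $\lambda$ of $A$ is real and admits a real eigenvector $x=(x_1,\dots,x_n)^T\neq 0$ with $Ax=\lambda x$. So it suffices to show that each such $\lambda$ lies in some interval $[a_{jj}-R_j,\,a_{jj}+R_j]$.

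Fix such $\lambda$ and $x$, and choose an index $j$ with $|x_j|=\max_i |x_i|$; then $|x_j|>0$ because $x\neq 0$. Reading off the $j$-th coordinate of $Ax=\lambda x$ and moving the diagonal term to the left gives
\[
(\lambda-a_{jj})\,x_j=\sum_{i\neq j} a_{ji}\,x_i .
\]
Apply the triangle inequality and the maximality of $|x_j|$:
\[
|\lambda-a_{jj}|\,|x_j|\le \sum_{i\neq j}|a_{ji}|\,|x_i|\le |x_j|\sum_{i\neq j}|a_{ji}| .
\]
Dividing by $|x_j|>0$ yields $|\lambda-a_{jj}|\le \sum_{i\neq j}|a_{ji}|$. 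By symmetry, $a_{ji}=a_{ij}$, so the right-hand side equals $R_j$ as defined in the statement. Since $\lambda$ is real, this says exactly that $\lambda\in[a_{jj}-R_j,\,a_{jj}+R_j]$, and hence $\lambda$ lies in the stated union of intervals.

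There is no genuine obstacle. The only points needing care are bookkeeping ones: $R_j$ is written as a column sum over $i$, while the eigenvector argument naturally produces a row sum, and symmetry identifies the two. Also, the Gershgorin disks reduce to real intervals only because the eigenvalues are real.
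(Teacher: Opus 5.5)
Your proof is correct: it is the classical maximal-coordinate eigenvector argument for Gershgorin's theorem, with symmetry used only to make the eigenvalues real and to identify the row sum you obtain with the column sum $R_j$. The paper gives no proof of its own and simply cites the standard Gershgorin theorem (Brualdi--Mellendorf), whose usual proof is the one you give.
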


\begin{lemma}[Orthant probability for Gaussian vector]\label{lemma:orthant}
Let $X \sim \mathcal{N}(0,\Sigma)$ be a Gaussian vector in $\mathbb{R}^n$, and assume there exists a constant $c_1>0$ such that the covariance matrix $\Sigma$ has eigenvalues satisfying
\[
|\lambda_i - 1| \leq \frac{c_1}{n}.
\]
Then there exists $c_2>0$ such that for all $n$ sufficiently large
\[
\mathbb{P}(X_1 \ge 0,\dots,X_n \ge 0)
\;\le\;
c_2 2^{-n} .
\]
\end{lemma}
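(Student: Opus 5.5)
The plan is to compare the density of $X$ directly with that of an isotropic Gaussian, using only the spectral bounds on $\Sigma$. First, for $n>c_1$ (this is the ``$n$ sufficiently large'' condition, say $n\ge 2c_1$) every eigenvalue satisfies $\lambda_i\ge 1-c_1/n>0$. Hence $\Sigma$ is positive definite, and $X$ has the density
\[
\rho(x)=(2\pi)^{-n/2}(\det\Sigma)^{-1/2}\exp\!\left(-\tfrac12 x^{T}\Sigma^{-1}x\right).
\]
We then bound the two $\Sigma$-dependent factors separately. For the determinant, $\det\Sigma=\prod_i\lambda_i\ge (1-c_1/n)^n$. For the quadratic form, the eigenvalues of $\Sigma^{-1}$ are $\lambda_i^{-1}\ge (1+c_1/n)^{-1}$, so $x^{T}\Sigma^{-1}x\ge |x|^2/(1+c_1/n)$ for all $x$.

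Writing $s:=1+c_1/n$, this gives the pointwise bound
\[
\rho(x)\le (1-c_1/n)^{-n/2}\,(2\pi)^{-n/2}\exp\!\left(-\frac{|x|^2}{2s}\right).
\]
The right-hand side is a constant multiple of the density of $N(0,sI)$. Integrating over the orthant $\{x_1\ge0,\dots,x_n\ge0\}$ factorizes into $n$ one-dimensional half-line integrals, each equal to $\tfrac12\sqrt{2\pi s}$. Therefore
\[
\PP(X_1\ge0,\dots,X_n\ge0)\le 2^{-n}\,s^{n/2}(1-c_1/n)^{-n/2}
=2^{-n}\left(\frac{1+c_1/n}{1-c_1/n}\right)^{n/2}.
\]

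It remains to show that the last factor is bounded uniformly in $n$. Using $\log(1+u)\le u$ and $-\log(1-u)\le 2u$ for $0\le u\le 1/2$, for $n\ge 2c_1$ we get
\[
\frac n2\log\frac{1+c_1/n}{1-c_1/n}\le \frac n2\cdot\frac{3c_1}{n}=\frac{3c_1}{2}.
\]
So the lemma holds with $c_2=e^{3c_1/2}$; the factor in fact tends to $e^{c_1}$. There is no real obstacle here. The only point needing care is to use both sides of the eigenvalue bound: the lower bound controls the determinant and the upper bound controls the exponent. The $c_1/n$ scaling is exactly what keeps the product of the $n$ per-coordinate distortions bounded, rather than exponentially growing.
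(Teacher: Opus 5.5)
Your proof is correct and is essentially the paper's argument. You dominate the density pointwise by $(\det\Sigma)^{-1/2}$ times an isotropic Gaussian of variance $\lambda_{\max}$, integrate over the orthant to get $2^{-n}\lambda_{\max}^{n/2}(\det\Sigma)^{-1/2}$, and use the two-sided $c_1/n$ eigenvalue bounds to keep this factor $O(1)$. The differences are cosmetic:
\begin{itemize}
\item You apply $x^{T}\Sigma^{-1}x\ge |x|^2/\lambda_{\max}$ directly in the original coordinates, whereas the paper diagonalizes and then undoes the rotation by orthogonal invariance.
\item You make explicit the positive definiteness needed for the density (via $n\ge 2c_1$) and the constant $c_2=e^{3c_1/2}$.
\end{itemize}
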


\begin{proof}
The orthant probability is given by
\[
\mathbb{P}(X \in \mathbb{R}_+^n)
=
\frac{1}{(2\pi)^{n/2}\sqrt{\det \Sigma}}
\int_{\mathbb{R}_+^n} \exp\!\left(-\tfrac12 x^T \Sigma^{-1} x\right)\,dx.
\]
Diagonalize $\Sigma = Q \Lambda Q^T$ with $Q$ orthogonal and $\Lambda = \mathrm{diag}(\lambda_1,\dots,\lambda_n)$, and make the change of variables $u = Q^T x$. Then
\[
\mathbb{P}(X \in \mathbb{R}_+^n)
=
\frac{1}{(2\pi)^{n/2}\sqrt{\det \Sigma}}
\int_{Q^T \mathbb{R}_+^n}
\exp\!\left(-\tfrac12 \sum_{i=1}^n \frac{u_i^2}{\lambda_i}\right)\,du.
\]
Let $\lambda_{\max} = \max_i \lambda_i$. Since $\lambda_i \le \lambda_{\max}$ for all $i$, we have
\[
\sum_{i=1}^n \frac{u_i^2}{\lambda_i}
\;\ge\;
\frac{1}{\lambda_{\max}} \|u\|^2,
\]
and hence
\[
\exp\!\left(-\tfrac12 \sum_{i=1}^n \frac{u_i^2}{\lambda_i}\right)
\;\le\;
\exp\!\left(-\frac{1}{2\lambda_{\max}} \|u\|^2\right).
\]
Therefore,
\[
\mathbb{P}(X \in \mathbb{R}_+^n)
\;\le\;
\frac{1}{(2\pi)^{n/2}\sqrt{\det \Sigma}}
\int_{Q^T \mathbb{R}_+^n}
\exp\!\left(-\frac{1}{2\lambda_{\max}} \|u\|^2\right)\,du.
\]
The integrand is invariant under orthogonal transformations, hence
\[
\int_{Q^T \mathbb{R}_+^n}
\exp\!\left(-\frac{1}{2\lambda_{\max}} \|u\|^2\right)\,du
=
\int_{\mathbb{R}_+^n}
\exp\!\left(-\frac{1}{2\lambda_{\max}} \|u\|^2\right)\,du =
2^{-n} (2\pi \lambda_{\max})^{n/2}.
\]
It follows that
\[
\mathbb{P}(X \in \mathbb{R}_+^n)
\;\le\;
2^{-n} \cdot \frac{\lambda_{\max}^{n/2}}{\sqrt{\det \Sigma}}.
\]
Using $\lambda_{\max} = 1 + O(1/n)$ and $\det \Sigma = \prod_{i=1}^n \lambda_i = \exp(O(1))$, we obtain
\[
\lambda_{\max}^{n/2} = \exp(O(1)), \qquad \sqrt{\det \Sigma} = \exp(O(1)).
\]
Therefore,
\[
\mathbb{P}(X \in \mathbb{R}_+^n)
\;\le\;
2^{-n} \exp(O(1)) = 2^{-n} O(1),
\]
as claimed.
\end{proof}

With these lemmas in hand, we proceed to the proof of the main result of this section.

\begin{proof}[Proof of Theorem \ref{thm:zero-free}]

Set $ n=\lfloor \omega^{-1/4}\rfloor$, and choose $n$ equally spaced points $t_j$ in the period interval $(-\pi/\omega, \pi/\omega)$ as $\{t_j\}_{j=1}^{n}=\{-\frac{\pi}{\omega}+j\Delta t\}_{j=1}^{n}$ where $\Delta t= {2\pi}{\omega^{-3/4}}$. 

Form the random vector of (variance-normalized) point evaluations
$$X := \left\{\frac{M(t_j)}{\sqrt{\mathbb{E}\left[ M(t_j)^2 \right]}}\right\}_{j=1}^n.$$
Here $M(t)$ is the Melnikov function for our system with random normal coefficients, making this a Gaussian vector whose entries are standard normals.

By the intermediate value theorem, the event that the Melnikov function $M(t)$ is zero-free implies that either all entries of $X$ are positive or all entries are negative.
Hence, the probability that $M(t)$ is zero-free is at most twice the orthant probability of $X$.  This reduces the proof of the theorem to showing that there exists $c_2>0$ such that
\begin{equation}
\label{eq:orthantgoal}
\mathbb{P}(X_1 \ge 0,\dots,X_n \ge 0)\leq c_2 2^{-n},
\end{equation}
for all $n$ sufficiently large.

Let $\tau_{i,j}= t_i-t_j$. Then $X\sim N(0, \Sigma)$ where
% \begin{center}
% $\Sigma=
% \begin{bmatrix}
% \Corr(t_1,t_1) & \dots & \Corr(t_1, t_i) & \dots & \Corr(t_1, t_n)\\
% \vdots & \ddots & \vdots &  &\vdots\\
% \Corr(t_i, t_1) &\dots & \Corr(t_i, t_i) & \dots & \Corr(t_i, t_n) \\
% \vdots &  & \vdots &\ddots& \vdots\\
% \Corr(t_n, t_1) & \dots &\Corr(t_n, t_i)&\dots & \Corr(t_n, t_n) 
% \end{bmatrix}
% $
% \end{center}

\begin{center}
$\Sigma=
\begin{bmatrix}
1 & \dots & \Corr(\tau_{1,i}) & \dots & \Corr(\tau_{1,n})\\
\vdots & \ddots & \vdots &  &\vdots\\
\Corr(\tau_{i,1}) &\dots & 1 & \dots & \Corr(\tau_{i,n}) \\
\vdots &  & \vdots &\ddots& \vdots\\
\Corr(\tau_{n,1}) & \dots &\Corr(\tau_{n,i})&\dots & 1 
\end{bmatrix}.
$
\end{center}
Note that 
$$\tau_{i,j}=t_i-t_j=\Delta t (i-j)=2\pi \omega^{-3/4}(i-j).$$
Then applying Lemma \ref{lemma:corrinbulk}, we get 
$$|\Corr(\tau_{i,j})|\leq  \frac{C}{|\sin(\omega \tau_{i,j}/2)|}\omega \log(1/\omega)=\frac{C}{|\sin(\pi\omega^{1/4}(i-j))|}\omega \log(1/\omega).$$
For all $i \neq j$ the value in the denominator is smallest when $i-j=\pm 1$.

For small $\omega$, $\sin(\pi\omega^{1/4})\geq \omega^{1/4}$, and we obtain the following uniform estimate for the off-diagonal entries of $\Sigma$
\begin{align*}
|\Corr(\tau_{i,j})|&\leq  \frac{C}{|\sin(\pi\omega^{1/4}(i-j))|}\omega \log(1/\omega)\leq {C}\omega^{3/4} \log(1/\omega)\leq \hat{C}\omega^{1/2}, \quad i \neq j.
\end{align*}
This immediately gives the estimate 
$$R_j := \sum_{i\neq j}|\Corr(\tau_{ij})|\leq \hat{C} \omega^{1/4},$$ and hence, applying Lemma \ref{lemma:Gershgorin}, we get that the eigenvalues $\lambda_i$ of $\Sigma$  are $\lambda_i=1+O(1/n)$. Then \eqref{eq:orthantgoal} follows by Lemma \ref{lemma:orthant}.
This concludes the proof of the theorem.
\end{proof}

\section{Concluding Remarks}\label{sec:concl}

\subsection{Positive damping}
Although our Gaussian model allows both signs of the damping
coefficient $\delta$, all of our zero-count results remain valid
when conditioning on $\delta>0$. Indeed, simultaneously reversing
the signs of $\delta$ and all forcing coefficients preserves their
joint distribution and replaces the Melnikov function $M$ by $-M$,
leaving its zeros unchanged while exchanging the events $\delta>0$
and $\delta<0$. Equivalently, we may take $\delta$
to have a positive half-normal distribution, independently of
the standard Gaussian forcing coefficients.

\subsection{Other probability distributions} A natural direction for future research is to consider other probability distributions on parameter space besides i.i.d. Gaussian coefficients, such as allowing the variances of the coefficients $a_k, b_k$ to depend on $k$ and $N$, allowing non-centered Gaussians, allowing dependence in the coefficients, or allowing non-Gaussian random variables.

\subsection{Other nonlinear oscillators} 
Our methods extend to other perturbed Hamiltonian systems.  For instance, let us consider the following example from \cite{Rand} with quadratic nonlinearity
\begin{equation}\label{eq:squaredeq}
\ddot{x}-x-\frac{x^2}{2}=\varepsilon(f(t)-\delta\dot{x}),
\end{equation}
where as before we take the forcing function $f(t)=f_{N,\omega}(t)$ to be a random trigonometric polynomial. We summarize, with details omitted, the results obtained by applying the same methods from the previous sections.

The Melnikov function associated with the homoclinic orbit $q^0(t)$ of \eqref{eq:squaredeq} is given by (after scaling by a constant factor)
\begin{equation}\label{eq:Melnikovfunction_rand}
M_{N, \omega}(t_0)=-\frac{2}{5\pi}\delta+\sum_{k=1}^N (\omega k)^2\csch(\pi k \omega)[b_k\cos(\omega k t_0)-a_k\sin(\omega k t_0)].
\end{equation}
An application of the Kac-Rice formula (Theorem \ref{thm:RandTrigWeights}) gives the expected number of zeros over a single period
\begin{equation}\label{eq:Melnikovzeros}
\EE Z_{N, \omega}=\frac{2}{\omega}\cdot \sqrt{\frac{\sum_{k=1}^{N} (\omega k)^6\csch^2(\pi k\omega)}{4/(25\pi^2)+\sum_{k=1}^{N}(\omega k)^4\csch^2(\pi k \omega)}}.
\end{equation}
Letting $C_\omega$ denote, as before, the limiting expectation as $N$ approaches infinity, and subsequently taking $\omega\rightarrow 0$, we find the same order of growth but with a different constant $\displaystyle C_\omega \sim \sqrt{\frac{5}{7}} \cdot {\frac{2}{\omega}}$ as $\omega \rightarrow 0$.

The conclusion of Theorem \ref{thm:nozeros} still holds, i.e., the Melnikov function has no zeros with overwhelming probability as $\omega \rightarrow \infty$, and
the conclusion of Theorem \ref{thm:zero-free} holds as well, giving overwhelming probability of $M_{N,\omega}$ having zeros as $\omega \rightarrow 0$ (under the same assumption that $N\omega>\gamma$ for fixed $\gamma>0$).
This follows from applying the same method of proof with small modifications.  The key observation is that the correlation function of $M_{N, \omega}$, which is given by 
$$\Corr(\tau)=\frac{4/(25\pi^2)+\sum_{k=1}^N(k \omega)^4\, \,\mathrm{csch}^2\!\big(\pi k \omega\big)\cos(\omega k\tau)}{4/(25\pi^2)+\sum_{k=1}^N (k \omega)^4 \,\mathrm{csch}^2\!\big(\pi k \omega\big) },$$
enjoys the same smoothness and decay rate needed to establish Lemma \ref{lemma:corrinbulk}.

\subsection{Generalizations to classes of perturbed Hamiltonian systems} As one might suspect from the outcomes in the above example, some of our results extend to general classes of perturbed Hamiltonian systems, further expanding the broad perspective offered by a probabilistic approach.   For instance, under suitable assumptions on the Hamiltonian $H$, we have the following general result giving overwhelming probability of no zeros at high frequency.

\begin{thm}[Absence of Melnikov zeros at high frequency]
\label{thm:generalhighfrequency}
Let $H:\mathbb{R}^2\to\mathbb{R}$ be a polynomial Hamiltonian
whose vector field $h=(H_v,-H_u)$ has a hyperbolic saddle
at the origin with a homoclinic orbit
$q^0(t)=(u^0(t),v^0(t))$. Assume that
\be\label{eq:AssumeNotZero}
\int_{-\infty}^{\infty}
H_v(q^0(t))v^0(t)\,dt\neq 0.
\ee
Let $M_{N,\omega}$ denote the Melnikov function associated to the perturbed system
\[
\begin{cases}
\dot u=H_v(u,v),\\
\dot v=-H_u(u,v)
+\varepsilon\bigl(f_{N,\omega}(t)-\delta v\bigr),
\end{cases}
\quad
f_{N,\omega}(t)
=\sum_{k=1}^N
\bigl[a_k\cos(k\omega t)+b_k\sin(k\omega t)\bigr],
\]
where $\delta,a_1,b_1,\ldots,a_N,b_N$ are independent
standard Gaussian random variables.
Let $Z_{N,\omega}$ denote the number of zeros of $M_{N,\omega}$ over a single period $[0,2\pi/\omega)$.
For every $A>0$, there exists a constant $C_A>0$, depending only on $A$, $H$, and the chosen homoclinic orbit, such that for all $\omega\geq 1$ and $N\geq 1$,
\be\label{eq:wop}
\mathbb{P}(Z_{N,\omega}\geq 1)
\leq C_A\omega^{-A}.
\ee
In particular, $M_{N,\omega}$ is zero-free
with overwhelming probability as $\omega\to\infty$.
\end{thm}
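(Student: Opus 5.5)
The plan follows the proof of Theorem \ref{thm:nozeros}: compute the Melnikov function explicitly as a weighted random trigonometric polynomial, bound its expected number of zeros with Theorem \ref{thm:RandTrigWeights}, and finish with Markov's inequality. By linearity of the Melnikov integral \eqref{eq:MelnikovGeneral}, with $g=(0,f_{N,\omega}(t)-\delta v)$ and $\nabla H\cdot g = H_v\,(f-\delta v)$, we get
\[
M_{N,\omega}(t_0) = -\delta I_0 + \sum_{k=1}^N \int_{-\infty}^{\infty} H_v(q^0(t))\bigl[a_k\cos(k\omega(t+t_0)) + b_k\sin(k\omega(t+t_0))\bigr]\,dt,
\]
where $I_0=\int H_v(q^0)v^0\,dt\neq 0$ by \eqref{eq:AssumeNotZero}.

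Set $\phi(t):=H_v(q^0(t))=\dot u^0(t)$, and let $\hat\phi(\nu)=\int \phi(t)e^{i\nu t}dt$. Expanding $\cos(\nu(t+t_0))$ and $\sin(\nu(t+t_0))$ rewrites each summand as a combination of $\cos(k\omega t_0)$ and $\sin(k\omega t_0)$. Since $(a_k,b_k)$ is rotation invariant, $M_{N,\omega}$ has the same law as a random trigonometric polynomial of the form \eqref{eq:TrigPoly}, with weights $\sigma_0=|I_0|$ and $\sigma_k=|\hat\phi(k\omega)|$.

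Theorem \ref{thm:RandTrigWeights} on $[0,2\pi/\omega)$ then gives
\[
\EE Z_{N,\omega}=\frac{2}{\omega}\sqrt{\frac{\sum_{k\ge1}(k\omega)^2|\hat\phi(k\omega)|^2}{I_0^2+\sum_{k\ge1}|\hat\phi(k\omega)|^2}}\le \frac{2}{\omega|I_0|}\Bigl(\sum_{k=1}^\infty (k\omega)^2|\hat\phi(k\omega)|^2\Bigr)^{1/2},
\]
uniformly in $N$. The hypotheses of Theorem \ref{thm:KR} hold because $\sigma_0>0$ and the process is stationary, and simplicity of zeros follows from Bulinskaya as in the earlier remark. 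The degenerate case where all $\sigma_k$ ($k\ge1$) vanish gives zero zeros almost surely, which is trivially fine.

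The key step is the decay of $\hat\phi$. Since $q^0$ is a homoclinic orbit to a hyperbolic saddle, $u^0$ and $v^0$ are smooth on $\mathbb{R}$ and converge to $0$ exponentially fast as $t\to\pm\infty$, along with all their derivatives: every derivative is a polynomial in $(u^0,v^0)$ by repeatedly applying the vector field, and these polynomials vanish at the origin. Hence $\phi$ is a Schwartz-type function with $\phi^{(m)}\in L^1$ for every $m$. Integrating by parts $m$ times gives $|\hat\phi(\nu)|\le \|\phi^{(m)}\|_{L^1}\,\nu^{-m}$.

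Choose $m=A+3$. For $\omega\ge1$ this gives $\sum_k (k\omega)^2|\hat\phi(k\omega)|^2\le C_m\,\omega^{2-2m}\sum_k k^{2-2m}\le C'\omega^{2-2m}$, so $\EE Z_{N,\omega}\le C\omega^{-m}\le C_A\omega^{-A}$. Markov's inequality, $\PP(Z\ge1)\le \EE Z$, then yields \eqref{eq:wop}.

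The main obstacle is justifying the exponential decay of all derivatives of $\phi$ rigorously. One can use the stable manifold theorem, which gives exponential convergence of $q^0$ to the saddle. The derivative bounds then follow because each $\phi^{(m)}$ is a polynomial in $q^0$ vanishing at the origin, hence $O(|q^0|)=O(e^{-\lambda|t|})$. A real-analytic strip extension would even give exponential decay in $\omega$, but it is not needed.
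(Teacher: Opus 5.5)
Your proposal is correct and takes essentially the same route as the paper. Both proofs write $M_{N,\omega}$ through the Fourier transform of $H_v(q^0(t))$, bound $\EE Z_{N,\omega}$ by Kac--Rice uniformly in $N$ using $D=\int H_v(q^0)v^0\,dt\neq 0$ in the denominator, get polynomial decay of that transform from the exponential decay of all its derivatives along the orbit, and finish with Markov's inequality.

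The differences are cosmetic. You obtain the Fourier decay by integrating by parts $m$ times, with $m$ an integer exceeding $A$, rather than by citing that the Fourier transform preserves Schwartz space. You also use rotation invariance of $(a_k,b_k)$ where the paper uses the complex coefficients $c_k=a_k-ib_k$.
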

The proof uses a well-known property of the Fourier transform, namely, that it is an automorphism of the Schwartz space of rapidly decreasing functions \cite[Ch.~5, Cor.~1.10]{SteinShakarchiFourier}.
\begin{proof}
Let $\lambda_s<0<\lambda_u$ be the stable and unstable eigenvalues of the linearization at the origin, and take
$0<\alpha<\min\{|\lambda_s|,\lambda_u\}$.

By the stable and unstable manifold theorem, the homoclinic orbit $q^0(t)$ converges exponentially to the origin forward and backward in time, and hence $|q^0(t)|=O(e^{-\alpha |t|})$ as $t\to\pm\infty$.

Writing
\[
\nabla H(x)=
\begin{bmatrix}
p_1(x)\\
p_2(x)
\end{bmatrix},
\]
since $H$ is assumed polynomial, $p_1,p_2$ are polynomials.
Since the origin is an equilibrium of the Hamiltonian vector field, we have $\nabla H(0)=0$.
In particular, letting $F(t)=p_2(q^0(t))$,
we have $F(t)=O(e^{-\alpha |t|})$ as $t\to\pm\infty$.
Moreover, since $q^0$ is a smooth orbit of the smooth Hamiltonian vector field, we have $\frac{d}{dt}q^0(t)=h(q^0(t))=O(e^{-\alpha|t|})$.
Repeated differentiation using the chain rule shows that every derivative of $F$ also decays exponentially. Thus, $F$ belongs to the Schwartz class of smooth functions all of whose derivatives are rapidly decreasing.

Introducing the complex numbers $c_k=a_k-i b_k$, the Melnikov integral can be written in terms of the Fourier transform of $F$,
\[
\widehat F(s):=\int_{-\infty}^{\infty}F(t)e^{ist}\,dt,
\]
as follows:
\[
M_{N,\omega}(t_0)
=
-\delta D+
\Real\left(
\sum_{k=1}^N
\widehat F(k\omega)c_k e^{ik\omega t_0}
\right),
\qquad
D:=\int_{-\infty}^{\infty}
p_2(q^0(t))v^0(t)\,dt,
\]
where $\Real(\cdot)$ denotes the real part.

By the Kac--Rice formula,
\[
\EE Z_{N,\omega}
=
2\sqrt{
\frac{\sum_{k=1}^N k^2|\widehat F(k\omega)|^2}
{D^2+\sum_{k=1}^N|\widehat F(k\omega)|^2}
}.
\]
If all nonconstant Fourier coefficients vanish, the Melnikov function is almost surely a nonzero constant, and the expectation is zero.

It suffices to prove the estimate \eqref{eq:wop} for $A>2$.
Since $\widehat F$ belongs to the Schwartz class \cite[Ch.~5, Cor.~1.10]{SteinShakarchiFourier}, there exists $K_A>0$ such that
$|\widehat F(s)|\leq K_As^{-A}$ for $s\geq1$.
Since $D\neq0$ by assumption \eqref{eq:AssumeNotZero}, we have uniformly in $N\geq1$ and $\omega\geq1$,
\[
\EE Z_{N,\omega}
\leq
\frac{2K_A}{|D|}\omega^{-A}
\left(\sum_{k=1}^{\infty}k^{2-2A}\right)^{1/2}
\leq C_A\omega^{-A}.
\]
The conclusion of the theorem then follows, as in the proof of Theorem \ref{thm:nozeros}, from Markov's inequality. \end{proof}

Exploring further generalizations of our results to classes of perturbed Hamiltonian systems seems a worthwhile direction for future research.

\subsection{Persistence probability} The probability of a random trigonometric polynomial having no zeros is closely related to the \emph{persistence probability} for Gaussian stationary processes (GSPs) \cite{Sumit2017}.  Namely, the probability of no zeros is twice the persistence probability (since the zero-free event consists of the two mutually exclusive and equiprobable events that the process is everywhere positive or everywhere negative).
In particular, by relating the random Melnikov function to its rescaling limit (this would require a coupling argument and continuity estimates), it may be possible to improve Theorem \ref{thm:zero-free} and obtain the sharp decay rate for the zero-free event using methods from \cite{Sumit2025}.  In this direction, we note that the limiting correlation function, (first taking $N \rightarrow \infty$ followed by $\omega \rightarrow 0$), is given by 
\begin{align*}
\Corr(\tau)
&=\frac{\int_0^\infty x^2\cdot\sech^2(\frac{\pi}{2}x)\cos(x \tau) dx}{\int_0^\infty  x^2\cdot\sech^2(\frac{\pi}{2}x) dx} \\
&=3 \csch(\tau) \left( 2\coth(\tau)-2\tau \csch^2(\tau)-\tau \right).
\end{align*}
The spectral measure, defined in \cite[Introduction]{Sumit2025}, associated to this correlation function is absolutely continuous with respect to Lebesgue measure, and its density is proportional to $x^2\cdot\sech^2(\frac{\pi}{2}x)$ which vanishes at $x=0$.  While this behavior falls outside the class of examples addressed by the main result in \cite{Sumit2025}, this exceptional case is discussed in \cite[Remark 2]{Sumit2025}, where it is noted that the so-called persistence exponent turns out to be infinite.  This indicates that the persistence probability (of the GSP obtained as a rescaling limit of the random periodic function) over a long interval decays faster than exponential in the size of the interval.  Heuristically, this long interval estimate should give information on the persistence probability for the original random periodic function over its growing period, which suggests that the decay rate in Theorem \ref{thm:zero-free} can be improved to decay that is faster than exponential.

\subsection{Quantitative perturbation thresholds}
The dynamical conclusions drawn from our results (invoking the classical framework summarized in Section \ref{sec:prelim}) hold for
sufficiently small $\e>0$, with a threshold that may depend on
the sampled coefficients as well as $N$ and $\omega$.

A natural direction for future research is to obtain
probabilistic estimates for this threshold. The quantitative
version of Bulinskaya's lemma provided in
\cite[Lemma~7]{NazarovSodin2} controls the probability that a
Gaussian function and its derivative are simultaneously small
somewhere. Such estimates could provide one ingredient in
obtaining explicit ranges of $\e>0$ for which the dynamical
conclusions remain valid with high probability. A substantial
additional challenge is to obtain suitable quantitative bounds
on the remainder in the perturbation expansion, of which the
Melnikov integral provides the first-order term.

\subsection*{Acknowledgments and statement on the use of AI}
The second-named author acknowledges support from the Simons Foundation grant 712397.
%and also acknowledges OpenAI for a grant of access to models through the ChatGPT for Academic Researchers program.
The results and their proofs were conceived by the authors.
ChatGPT 6 was used to assist with proofreading and revising
the completed manuscript. The authors evaluated each suggestion and made their own revisions.

\section{Appendix}

Here we provide details for the evaluation of the integrals \eqref{eq:residue1} and \eqref{eq:residue2} using contour integration and residues.

\subsection{Residue computation}\label{sec:residues}
Our goal is to show
\begin{equation}\label{eq:Residue}
\int_{-\infty}^{\infty} \sech(t)\tanh(t) e^{i\omega(t+t_0)}\, dt
= 
i \pi \omega e^{i\omega t_0}
\sech\!\left(\frac{\pi\omega}{2}\right), \quad \omega > 0,
\end{equation}
from which \eqref{eq:residue1} and \eqref{eq:residue2} follow by taking real and imaginary parts.

For a positive integer $\ell$, consider the rectangular contour 
$\Gamma_{\ell} = \gamma_1 + \gamma_2 + \gamma_3 + \gamma_4$ in the complex plane, where
\begin{align*}
\gamma_1 &: [-\ell, \ell],\\
\gamma_2 &: [\ell, \ell + i 2\pi \ell],\\
\gamma_3 &: [\ell + i 2\pi \ell, -\ell + i 2\pi \ell],\\
\gamma_4 &: [-\ell + i 2\pi \ell, -\ell].
\end{align*}
Note that our integral in \eqref{eq:Residue} is the integral along $\gamma_1$ as $\ell \to \infty$. 
Moreover, we will show (see Section \ref{sec:gammas} below) that the integrals over $\gamma_2, \gamma_3$, and $\gamma_4$ tend to zero as $\ell \rightarrow \infty$.

Hence, 
\begin{equation}
\int_{-\infty}^{\infty} \sech(t)\tanh(t)e^{i\omega(t+t_0)}\, dt
=  \lim_{\ell\to\infty} \oint_{\Gamma_{\ell}} \sech(z)\tanh(z)e^{i\omega(z+t_0)}\, dz .
\end{equation}
Since the integrand
\[
\sech(z)\tanh(z)e^{i\omega(z+t_0)} 
= \frac{\sinh(z)}{\cosh^2(z)} e^{i\omega(z+t_0)},
\]
has poles of order two at 
\( z_k = i\left(\frac{\pi}{2} + k\pi\right), \quad k \in \mathbb{Z} \), we have
\begin{align*}
\lim_{\ell\to\infty} \oint_{\Gamma_{\ell}} \frac{\sinh(z)}{\cosh^2(z)} e^{i\omega(z+t_0)}\, dz
&= 2\pi i \sum_{k=0}^{\infty} 
\operatorname{Res}\left(
\frac{\sinh(z)}{\cosh^2(z)} e^{i\omega(z+t_0)}; z_k
\right).
\end{align*}

To compute these residues, define for each $k$:
\[
\cosh(z) = (z - z_k) g_k(z),
\quad \text{where} \quad 
z_k = i\left(\frac{\pi}{2} + k\pi\right).
\]
Then
\[
\lim_{z\to z_k} g_k(z) = \sinh(z_k) = 
\begin{cases}
i, & k \text{ even},\\
-i, & k \text{ odd},
\end{cases}
\]
and 
$$\lim_{z\rightarrow z_k}g_k'(z)=\lim_{z\rightarrow z_k} \frac{\sinh(z)(z-z_k)-\cosh(z)}{(z-z_k)^2}=\lim_{z\rightarrow z_k}\frac{\cosh(z)}{2}=0.$$

Thus,
\begin{align*}
\sum_{k=0}^{\infty} 
\operatorname{Res}\left(
\frac{\sinh(z)}{\cosh^2(z)} e^{i\omega(z+t_0)}; z_k
\right)
&= 
\sum_{k=0}^{\infty} 
\left[
\frac{d}{dz}
\left(
\frac{\sinh(z)e^{i\omega(z+t_0)}}{g_k(z)^2}
\right)
\right]_{z=z_k} \\
&= 
\sum_{k=0}^{\infty} 
\frac{i\omega\sinh(z_k)e^{i\omega(z_k+t_0)}}{g_k(z_k)^2} \\
&= 
\omega e^{i\omega t_0}\sum_{k=0}^{\infty} (-1)^k e^{i\omega z_k} \\
&= 
\omega e^{i\omega t_0} e^{-\frac{\pi\omega}{2}}
\cdot \frac{1}{1 + e^{-\pi\omega}} \\
&= 
\frac{\omega}{2} e^{i\omega t_0} \sech\!\left(\frac{\pi\omega}{2}\right).
\end{align*}

Therefore,
\be
\int_{-\infty}^{\infty} \sech(t)\tanh(t) e^{i\omega(t+t_0)} \, dt
=
2\pi i \cdot \frac{\omega}{2} e^{i\omega t_0}
\sech\!\left(\frac{\pi\omega}{2}\right),
\ee
which verifies \eqref{eq:Residue}.

It remains only to show that the integrals over $\gamma_2$, $\gamma_3$, and $\gamma_4$ tend to zero as $\ell \to \infty$.

\subsection{Estimates along remaining segments of the contour}\label{sec:gammas}

For $z=x+iy$, we have
\[
F(z) := \frac{\sinh(z)}{\cosh^2(z)} e^{i\omega(z+t_0)}
= \frac{\sinh(x)\cos(y) + i\cosh(x)\sin(y)}
{[\cosh(x)\cos(y) + i\sinh(x)\sin(y)]^2}
e^{-\omega y} e^{i\omega(x+t_0)},
\]
which we use to estimate the modulus of the integrand
\[
\bigg|\frac{\sinh(z)}{\cosh^2(z)} e^{i\omega(z+t_0)}\bigg|
= \frac{\sqrt{\sinh^2(x) + \sin^2(y)}}{\cosh^2(x) - \sin^2(y)}\, e^{-\omega y}.
\]

On $\gamma_3$ (where $y = 2\pi \ell$ and $x \in [-\ell, \ell]$), we have
\[
\Big|\int_{\gamma_3} F(z)\,dz\Big|
\le 2 \ell \, e^{- 2 \pi \omega \ell} 
\max_{x\in[-\ell,\ell]}
\frac{|\sinh(x)|}{\cosh^2(x)} \le 2\ell\, e^{-2\pi\omega\ell},
\]
which tends to zero as $\ell \to \infty$.
Similar estimates show that the integral over $\gamma_2$ satisfies
\[
\Big|\int_{\gamma_2} F(z)\,dz\Big|
\le 2\pi \ell 
\frac{\sqrt{\sinh^2(\ell)+1}}{\cosh^2(\ell)-1} = 2\pi \ell 
\frac{\cosh(\ell)}{\cosh^2(\ell)-1},
\]
which tends toward zero as $\ell \to \infty$ (the integral along $\gamma_4$ satisfies the same estimate).

\bibliographystyle{abbrv}
\bibliography{Melnikov}

@book {Stein,
    AUTHOR = {Stein, Elias M. and Shakarchi, Rami},
     TITLE = {Real analysis},
    SERIES = {Princeton Lectures in Analysis},
    VOLUME = {3},
      NOTE = {Measure theory, integration, and Hilbert spaces},
 PUBLISHER = {Princeton University Press, Princeton, NJ},
      YEAR = {2005},
     PAGES = {xx+402},
      ISBN = {0-691-11386-6},
   MRCLASS = {28B15 (00-01 26-01 26A03 26A09 28-01 46-01 47-01)},
  MRNUMBER = {2129625},
MRREVIEWER = {Steven\ George\ Krantz},
}

@book {Rand,
    AUTHOR = {Rand, Richard H.},
     TITLE = {Topics in nonlinear dynamics with computer algebra},
    SERIES = {Computation in Education: Mathematics, Science and
              Engineering},
    VOLUME = {1},
 PUBLISHER = {Gordon and Breach Science Publishers, Langhorne, PA},
      YEAR = {1994},
     PAGES = {x+229}
}

@ARTICLE{KLN,
    AUTHOR = {Krishnapur, Manjunath and Lundberg, Erik and Nguyen, Oanh},
     TITLE = {The number of limit cycles bifurcating from a randomly
              perturbed center},
   JOURNAL = {Ann. Probab.},
  FJOURNAL = {The Annals of Probability},
    VOLUME = {54},
      YEAR = {2026},
    NUMBER = {1},
     PAGES = {269--300}
}

@article {HolmesStrange,
    AUTHOR = {Holmes, P.},
     TITLE = {A nonlinear oscillator with a strange attractor},
   JOURNAL = {Philos. Trans. Roy. Soc. London Ser. A},
  FJOURNAL = {Philosophical Transactions of the Royal Society of London.
              Series A. Mathematical and Physical Sciences},
    VOLUME = {292},
      YEAR = {1979},
    NUMBER = {1394},
     PAGES = {419--448}
}

@article {PiecewiseLinear,
    AUTHOR = {Vallipriyatharsini, S. and Bazeera, A. Z. and Chinnathambi, V.
              and Rajasekar, S.},
     TITLE = {Homoclinic transition to chaos in the {D}uffing oscillator
              driven by periodic piecewise linear forces},
   JOURNAL = {J. Nonlinear Anal. Optim.},
  FJOURNAL = {Journal of Nonlinear Analysis and Optimization. Theory and
              Applications},
    VOLUME = {13},
      YEAR = {2022},
    NUMBER = {1},
     PAGES = {25--38},
      ISSN = {1906-9685,3057-0867},
   MRCLASS = {37D45 (34A08 34D10 37C29 37J20)},
  MRNUMBER = {4431662},
}

@book {GuckHolmes,
    AUTHOR = {Guckenheimer, John and Holmes, Philip},
     TITLE = {Nonlinear oscillations, dynamical systems, and bifurcations of
              vector fields},
    SERIES = {Applied Mathematical Sciences},
    VOLUME = {42},
 PUBLISHER = {Springer-Verlag, New York},
      YEAR = {1983},
     PAGES = {xvi+453},
      ISBN = {0-387-90819-6},
   MRCLASS = {58-02 (34C35 58Fxx 70K05)},
  MRNUMBER = {709768},
MRREVIEWER = {Igor\ Gumowski},
       DOI = {10.1007/978-1-4612-1140-2},
       URL = {https://doi.org/10.1007/978-1-4612-1140-2},
}

@article {EdelmanKostlan95,
    AUTHOR = {Edelman, Alan and Kostlan, Eric},
     TITLE = {How many zeros of a random polynomial are real?},
   JOURNAL = {Bull. Amer. Math. Soc. (N.S.)},
  FJOURNAL = {American Mathematical Society. Bulletin. New Series},
    VOLUME = {32},
      YEAR = {1995},
    NUMBER = {1},
     PAGES = {1--37},
      ISSN = {0273-0979},
   MRCLASS = {60G99 (30B20 42A05)},
  MRNUMBER = {1290398},
MRREVIEWER = {Kambiz Farahmand},
       DOI = {10.1090/S0273-0979-1995-00571-9},
       URL = {http://dx.doi.org/10.1090/S0273-0979-1995-00571-9},
}

@article {Arnold,
    AUTHOR = {Arnold, V. I.},
     TITLE = {Instability of dynamical systems with many degrees of freedom},
   JOURNAL = {Dokl. Akad. Nauk SSSR},
  FJOURNAL = {Doklady Akademii Nauk SSSR},
    VOLUME = {156},
      YEAR = {1964},
     PAGES = {9--12},
      ISSN = {0002-3264},
   MRCLASS = {34.65 (57.48)},
  MRNUMBER = {163026},
MRREVIEWER = {J.\ Moser},
}

@article{Yang,
  title = {Duffing equation with two periodic forcings: The phase effect},
  author = {Yang, Junzhong and Qu, Zhilin and Hu, Gang},
  journal = {Phys. Rev. E},
  volume = {53},
  issue = {5},
  pages = {4402--4413},
  numpages = {0},
  year = {1996}
}

@Article{Li2024,
author={Li, Shuangbao
and Xu, Rui
and Kou, Liying},
title={Suppressing homoclinic chaos for a class of vibro-impact oscillators by non-harmonic periodic excitations},
journal={Nonlinear Dynamics},
year={2024},
month={Jul},
day={01},
volume={112},
number={13},
pages={10845-10870}
}

@article{Wang,
title = {Chaos control in duffing system},
author = {Ruiqi Wang and Jin Deng and Zhujun Jing},
journal = {Chaos, Solitons \& Fractals},
volume = {27},
number = {1},
pages = {249-257},
year = {2006}
}

@article {Sumit2025,
    AUTHOR = {Feldheim, Naomi D. and Feldheim, Ohad N. and Mukherjee, Sumit},
     TITLE = {Persistence and ball exponents for {G}aussian stationary
              processes},
   JOURNAL = {Comm. Pure Appl. Math.},
  FJOURNAL = {Communications on Pure and Applied Mathematics},
    VOLUME = {78},
      YEAR = {2025},
    NUMBER = {10},
     PAGES = {1949--2000}
}

@article {Sumit2017,
    AUTHOR = {Dembo, Amir and Mukherjee, Sumit},
     TITLE = {Persistence of {G}aussian processes: non-summable
              correlations},
   JOURNAL = {Probab. Theory Related Fields},
  FJOURNAL = {Probability Theory and Related Fields},
    VOLUME = {169},
      YEAR = {2017},
    NUMBER = {3-4},
     PAGES = {1007--1039},
      ISSN = {0178-8051,1432-2064},
   MRCLASS = {60G15 (82C24)},
  MRNUMBER = {3719062},
MRREVIEWER = {Mikhail\ A.\ Lifshits},
       DOI = {10.1007/s00440-016-0746-9},
       URL = {https://doi.org/10.1007/s00440-016-0746-9},
}

@article{Chacon,
  title = {Routes to suppressing chaos by weak periodic perturbations},
  author = {Chac\'on, R. and D\'{\i}az Bejarano, J.},
  journal = {Phys. Rev. Lett.},
  volume = {71},
  issue = {19},
  pages = {3103--3106},
  numpages = {0},
  year = {1993},
  month = {Nov},
  publisher = {American Physical Society},
  doi = {10.1103/PhysRevLett.71.3103},
  url = {https://link.aps.org/doi/10.1103/PhysRevLett.71.3103}
}

@article{Gear,
title = {An analytical study of controlling chaotic dynamics in a spur gear system},
author = {A. Saghafi and A. Farshidianfar},
journal = {Mechanism and Machine Theory},
volume = {96},
pages = {179-191},
year = {2016}
}

@article {Melnikov,
    AUTHOR = {Melnikov, V. K.},
     TITLE = {On the stability of a center for time-periodic perturbations},
   JOURNAL = {Trudy Moskov. Mat. Ob\v s\v c.},
  FJOURNAL = {Trudy Moskovskogo Matemati\v ceskogo Ob\v s\v cestva},
    VOLUME = {12},
      YEAR = {1963},
     PAGES = {3--52}
}

@book{SteinShakarchiFourier,
  author    = {Elias M. Stein and Rami Shakarchi},
  title     = {Fourier Analysis: An Introduction},
  series    = {Princeton Lectures in Analysis},
  volume    = {1},
  publisher = {Princeton University Press},
  year      = {2003}
}

@article {BrMe94,
    AUTHOR = {Brualdi, Richard A. and Mellendorf, Stephen},
     TITLE = {Regions in the complex plane containing the eigenvalues of a
              matrix},
   JOURNAL = {Amer. Math. Monthly},
  FJOURNAL = {American Mathematical Monthly},
    VOLUME = {101},
      YEAR = {1994},
    NUMBER = {10},
     PAGES = {975--985},
      ISSN = {0002-9890,1930-0972},
   MRCLASS = {15A18},
  MRNUMBER = {1304322},
       DOI = {10.2307/2975164},
       URL = {https://doi.org/10.2307/2975164},
}

@book {Grad,
    AUTHOR = {Gradshteyn, I. S. and Ryzhik, I. M.},
     TITLE = {Table of integrals, series, and products},
   EDITION = {Eighth},
      NOTE = {Translated from the Russian,
              Translation edited and with a preface by Daniel Zwillinger and
              Victor Moll,
              Revised from the seventh edition},
 PUBLISHER = {Elsevier/Academic Press, Amsterdam},
      YEAR = {2015},
     PAGES = {xlvi+1133}
}

@article {Lundberg,
    AUTHOR = {Lundberg, Erik},
     TITLE = {Limit cycle enumeration in random vector fields},
   JOURNAL = {Trans. Amer. Math. Soc.},
  FJOURNAL = {Transactions of the American Mathematical Society},
    VOLUME = {376},
      YEAR = {2023},
    NUMBER = {8},
     PAGES = {5693--5730}
}

@Article{AzaisBul,
author={Aza{\"i}s, Jean-Marc
and Wschebor, Mario},
title={On the regularity of the distribution of the maximum of one-parameter Gaussian processes},
journal={Probability Theory and Related Fields},
year={2001},
month={Jan},
day={01},
volume={119},
number={1},
pages={70-98}
}

@article {NazarovSodin2,
    AUTHOR = {Nazarov, F. and Sodin, M.},
     TITLE = {Asymptotic laws for the spatial distribution and the number of
              connected components of zero sets of {G}aussian random
              functions},
   JOURNAL = {Zh. Mat. Fiz. Anal. Geom.},
  FJOURNAL = {Zhurnal Matematichesko\u\i \ Fiziki, Analiza, Geometrii. Journal
              of Mathematical Physics, Analysis, Geometry},
    VOLUME = {12},
      YEAR = {2016},
    NUMBER = {3},
     PAGES = {205--278},
      ISSN = {1812-9471},
   MRCLASS = {60G15},
  MRNUMBER = {3522141},
MRREVIEWER = {Giacomo Aletti},
       DOI = {10.15407/mag12.03.205},
       URL = {http://dx.doi.org/10.15407/mag12.03.205},
}

@book {AzaisWscheborbook,
    AUTHOR = {Azais, Jean-Marc and Wschebor, Mario},
     TITLE = {Level sets and extrema of random processes and fields},
 PUBLISHER = {John Wiley \& Sons, Inc., Hoboken, NJ},
      YEAR = {2009},
     PAGES = {xii+393},
      ISBN = {978-0-470-40933-6},
   MRCLASS = {60-02 (60E15 60G05 60G15 60G60 60G70)},
  MRNUMBER = {2478201},
MRREVIEWER = {Anna Amirdjanova},
       DOI = {10.1002/9780470434642},
       URL = {http://dx.doi.org/10.1002/9780470434642},
}

@book{bruckner1997real,
  title={Real Analysis},
  author={Bruckner, A.M. and Bruckner, J.B. and Thomson, B.S.},
  isbn={9780134588865},
  lccn={96022123},
  url={https://books.google.com/books?id=1WY6u0C_jEsC},
  year={1997},
  publisher={Prentice-Hall}
}

@book {BabyRudin,
    AUTHOR = {Rudin, Walter},
     TITLE = {Principles of mathematical analysis},
    SERIES = {International Series in Pure and Applied Mathematics},
   EDITION = {Third},
 PUBLISHER = {McGraw-Hill Book Co., New York-Auckland-D\"usseldorf},
      YEAR = {1976},
     PAGES = {x+342},
   MRCLASS = {26-02},
  MRNUMBER = {385023},
}

\end{document}